\newtheorem{lem}{Lemma}
\newtheorem{rem}{Remark}
\newtheorem{conj}{Conjecture}
\newtheorem{theo}{Theorem}
\newtheorem{pro}{Proposition}
\newtheorem{dfn}{Definition}
\title[Zeros of the Bergman kernel of the Fock-Bargmann-Hartogs domain]{Zeros of the Bergman kernel of the Fock-Bargmann-Hartogs domain and the interlacing property}
\author{Atsushi Yamamori}
\address{Graduate School of Mathematics, Nagoya University, Furo-Cho, Chikusa-Ku, Nagoya 464-8602, Japan}
\begin{document}
\subjclass[2000]{32A25}
\keywords{Bergman kernel, Fock-Bargmann-Hartogs domain, Lu Qi-Keng problem, Interlacing property,  polylogarithm}
\email{d08006u@math.nagoya-u.ac.jp}
\begin{abstract}
In this paper we consider the zeros of the Bergman kernel of
the Fock-Bargmann-Hartogs domain $\{ (z,\zeta)\in\Bbb{C}^n\times\Bbb{C}^m; ||\zeta||^2 <e^{-\mu ||z||^2}\}$ where $\mu>0$.
We describe how the existence of zeros of the Bergman kernel depends on the integers $m$ and $n$ with the help of the interlacing property.
\end{abstract}
\maketitle
\section{Introduction}
\newtheorem{Thm}{Theorem}
\def\theThm{\Alph{Thm}}
Let $\Omega$ be a domain in $\mathbb C^n$ and $K_\Omega (z,w)$ its Bergman kernel.
In \cite{Lu}, Lu Qi-Keng conjectured that if $\Omega$ is simply connected, then $K_{\Omega}$ is zero-free on $\Omega \times \Omega$.
It is already known that this conjecture is false in general (see \cite{Boas1986,S}).
A domain in $\mathbb C^n$ is called a Lu Qi-Keng domain if its Bergman kernel function is zero-free.\par
Let $\mu>0$. In our previous works \cite{Y2010,Rims}, 
we obtained an explicit formula of the Bergman kernel of $D_{n,m}= \{ (z,\zeta)\in\Bbb{C}^n\times\Bbb{C}^m; ||\zeta||^2 <e^{-\mu ||z||^2}\}$ 
which is called the Fock-Bargmann-Hartogs domain (abbr. FBH domain) in this paper.
The aim of this paper is to establish the following theorem:
\begin{Thm}
 For any fixed $n\in \mathbb N$, there exists a unique number $m_0(n)\in \mathbb{N}$ such that
$D_{n,m}$ is a Lu Qi-Keng domain if and only if $m\geq m_0(n)$.
\end{Thm}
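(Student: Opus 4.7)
The plan is to reduce the Lu Qi-Keng question for $D_{n,m}$ to a question about real zeros of a one-variable polynomial extracted from the explicit Bergman kernel formula obtained in \cite{Y2010, Rims}, and then to exploit an interlacing property as $m$ varies with $n$ fixed.

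First, I would use the explicit kernel formula, which involves a polylogarithm because of the Hartogs/Reinhardt structure of $D_{n,m}$, to reduce zero-freeness of $K_{D_{n,m}}$ to the non-vanishing of a single one-variable polynomial $P_{n,m}(x)$ on a real interval $I$. Concretely, after separating an obvious non-vanishing factor, the diagonal and off-diagonal values of the kernel are controlled by the real variable $x = ||\zeta||^2 e^{\mu ||z||^2} \in (0,1)$, and $P_{n,m}$ should arise from a finite polylogarithm-type expression naturally attached to this formula. This step should be a routine extraction.

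The heart of the argument, and the main obstacle, is the interlacing property: I would prove that the real zeros of $P_{n,m}$ and $P_{n,m+1}$ interlace. Two standard routes look plausible. The first is to derive a three-term recurrence $P_{n,m+1}(x) = \alpha_m(x) P_{n,m}(x) - \beta_m(x) P_{n,m-1}(x)$ with $\beta_m>0$ on $I$, so that a Sturm-sequence argument gives interlacing. The second is to show that $P_{n,m+1}$ is, up to a positive factor, the derivative of $P_{n,m}$ (or of a closely related auxiliary function), so that Rolle's theorem applies. The polylogarithm identity $x \frac{d}{dx} \mathrm{Li}_{-m}(x) = \mathrm{Li}_{-m-1}(x)$ makes the second route especially natural. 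Converting this identity into the desired interlacing statement on $I$, with the correct normalisations and sign conditions, is the technical heart of the proof.

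Once interlacing is available, the theorem follows quickly. Let $S_n := \{\, m \in \mathbb{N} : D_{n,m} \text{ is Lu Qi-Keng} \,\}$. Interlacing implies that if $P_{n,m}$ has no zero in $I$ then $P_{n,m+1}$ has at most one zero in $I$; a direct sign evaluation at the two endpoints of $I$ should then rule out any such zero, so $S_n$ is an upper set in $\mathbb{N}$, which is precisely the ``if and only if'' half of the statement. For nonemptiness of $S_n$, I would study $P_{n,m}$ as $m \to \infty$ and show through coefficient asymptotics that $P_{n,m}$ becomes sign-definite on $I$ for $m$ sufficiently large, so $m \in S_n$ eventually. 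Setting $m_0(n) := \min S_n$ then yields the asserted unique threshold.
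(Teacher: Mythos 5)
Your overall architecture (explicit kernel $\Rightarrow$ one-variable polynomial, interlacing in $m$ with $n$ fixed, threshold from the behaviour of the polynomial for large $m$) is the same as the paper's, but your reduction step contains a genuine error, and it is not cosmetic. The Lu Qi-Keng property concerns the kernel on all of $D_{n,m}\times D_{n,m}$, so the relevant variable $t=e^{\mu\langle z,z'\rangle}\langle \zeta,\zeta'\rangle$ is \emph{complex} and the correct reduction (Proposition \ref{pro}, from \cite{Y2010}) is that $D_{n,m}$ is Lu Qi-Keng if and only if $\frac{d^m}{dt^m}Li_{-n}(t)$ has no zeros in the disk $|t|<1$. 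Restricting to the real variable $x=\|\zeta\|^2e^{\mu\|z\|^2}\in(0,1)$, as you propose, only sees diagonal-type values; since the numerator polynomial $A_{n,m}(t)$ (defined by $\frac{d^m}{dt^m}Li_{-n}(t)=A_{n,m}(t)/(1-t)^{n+m+1}$) has positive coefficients, it never vanishes on $(0,1)$, so your criterion would declare every $D_{n,m}$ a Lu Qi-Keng domain --- which is false: for instance $\frac{d}{dt}Li_{-2}(t)=Li_{-3}(t)/t$ has a zero in the unit disk, so $D_{2,1}$ is not Lu Qi-Keng. The zeros that obstruct the property are negative real and are attained only off the diagonal (e.g.\ at $\zeta'=-\zeta$). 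Consequently, passing from ``no zeros in the complex disk'' to a real-interval condition is not a ``routine extraction'': it requires proving that $A_{n,m}$ has only real (negative) zeros, and in the paper this real-rootedness is itself the main output of the interlacing induction (the Eulerian polynomial $A_{n,1}=A_{n+1}$ is real-rooted, and the recurrence $A_{n,m+1}=(n+m+1)A_{n,m}+(1-t)A_{n,m}'$ propagates real-rootedness and interlacing via the Liu--Wang lemma), not a preliminary step done before the interlacing.

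Two further gaps in the mechanism you sketch. First, your second route --- that $P_{n,m+1}$ is, up to a positive factor, the derivative of $P_{n,m}$ --- is not literally true; the actual relation is the affine-in-derivative recurrence above, so bare Rolle only covers the base case and one needs a Hermite--Kakeya/Obreschkoff-type criterion (Lemma \ref{liu} in the paper) for the induction step. Second, your endpoint-sign argument for the upper-set property cannot succeed on the interval you chose; the paper instead reads the monotonicity directly off the interlacing: the largest root $r_{n,m}$ of $A_{n,m}$ is negative and strictly decreasing in $m$, and $r_{n,m}\to-\infty$ (proved from the closed form of $A_{n,m}$ in Stirling numbers together with the annulus bound for zeros of polynomials with positive coefficients), so one may set $m_0(n)=\min\{m:\ r_{n,m}\le -1\}$. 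This also delivers the half of the equivalence your sketch does not really address: for $m<m_0(n)$ the root $r_{n,m}\in(-1,0)$ is an actual zero of the kernel, so $D_{n,m}$ fails to be Lu Qi-Keng.
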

As a related work, we should mention the following result proved by L. Zhang and W. Yin in \cite{Zhang2009}.
\begin{Thm}[\mbox{\cite[Theorem 1]{Zhang2009}}]\label{zhang}
For fixed $n$ and $p$, there exists a constant $m_0=m_0(n,p)$ such that 
$$\Omega_{n,m}^{p,1}:= \{ (z,\zeta) \in\mathbb{C}^n \times \mathbb C^m ; ||\zeta||^{2p} + || z||^2 <1\}$$
is a Lu Qi-Keng domain for all $m\geq m_0$.
\end{Thm}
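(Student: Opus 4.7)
My plan is to reduce zero-freeness of $K_{\Omega_{n,m}^{p,1}}$ to the zero-freeness, on the closed unit disk, of a univariate polynomial of degree $n$ in $u$ whose coefficients depend polynomially on $m$, and then to invoke Hurwitz's theorem as $m\to\infty$. I begin by viewing $\Omega_{n,m}^{p,1}$ as a Hartogs domain over the unit ball $B_{n}\subset\mathbb{C}^{n}$ with fibres $\{\|\zeta\|<(1-\|z\|^{2})^{1/(2p)}\}$, decomposing $L^{2}_{a}(\Omega_{n,m}^{p,1})$ into orthogonal summands by degree in $\zeta$, and combining the weighted ball kernels
\[
K_{B_{n},\alpha}(z,w)=\frac{\Gamma(n+1+\alpha)}{\pi^{n}\Gamma(1+\alpha)}(1-\langle z,w\rangle)^{-(n+1+\alpha)}
\]
with the multinomial identity $\sum_{|\beta|=k}\zeta^{\beta}\bar{\eta}^{\beta}/\beta!=\langle\zeta,\eta\rangle^{k}/k!$ to telescope the Bergman series into the closed form
\[
K((z,\zeta),(w,\eta))=\frac{G(u)}{\pi^{n+m}(1-\langle z,w\rangle)^{n+1+m/p}},\quad u=\frac{\langle\zeta,\eta\rangle}{(1-\langle z,w\rangle)^{1/p}},
\]
\[
G(u)=\sum_{k\geq 0}\frac{(k+m)!}{k!}\prod_{j=1}^{n}\!\left(j+\frac{k+m}{p}\right)u^{k}.
\]
The fractional power is the principal branch, which is well defined since $\mathrm{Re}(1-\langle z,w\rangle)>0$ on $B_{n}\times B_{n}$. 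A brief Cauchy-Schwarz and AM-GM argument, based on $(\|\zeta\|\|\eta\|)^{p}<\sqrt{(1-\|z\|^{2})(1-\|w\|^{2})}\leq 1-\|z\|\|w\|\leq|1-\langle z,w\rangle|$, shows that $|u|<1$ on $\Omega_{n,m}^{p,1}\times\Omega_{n,m}^{p,1}$.

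Since the prefactor never vanishes, the Lu Qi-Keng property becomes equivalent to $G(u)\neq 0$ for $|u|<1$. Writing $(u\partial_{u})u^{k}=ku^{k}$ and setting $Q(x):=\prod_{j=1}^{n}(j+x/p)$, a shift in the summation index gives
\[
G(u)=\partial_{u}^{m}\!\Bigl[Q(u\partial_{u})(1-u)^{-1}\Bigr]=\frac{m!\,N_{m}(u)}{(1-u)^{m+n+1}},
\]
where, expanding $Q(u\partial_{u})(1-u)^{-1}=\sum_{j=1}^{n+1}C_{j}(1-u)^{-j}$ in partial fractions at its only pole $u=1$ and differentiating, one finds $N_{m}(u)=\sum_{j=1}^{n+1}C_{j}\binom{j+m-1}{m}(1-u)^{n+1-j}$, a polynomial of degree at most $n$ in $u$ with coefficients polynomial in $m$. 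The leading residue is $C_{n+1}=\lim_{k\to\infty}Q(k)\,n!/k^{n}=n!/p^{n}\neq 0$, and since $\binom{n+m}{m}\sim m^{n}/n!$ dominates the binomial factors attached to the $C_{j}$ with $j\leq n$, the rescaled polynomial $m^{-n}N_{m}(u)$ converges, uniformly on compact subsets of $\mathbb{C}$, to the nonzero constant $1/p^{n}$. Hurwitz's theorem then produces $m_{0}=m_{0}(n,p)$ such that $N_{m}$ has no zeros on $\{|u|\leq 1\}$ for every $m\geq m_{0}$, and combined with the above reduction this establishes the Lu Qi-Keng property of $\Omega_{n,m}^{p,1}$ for all such $m$.

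The principal obstacle is verifying that the top residue $C_{n+1}=n!/p^{n}$ really fails to vanish: an accidental cancellation would drop the degree of $N_{m}$ and destroy the dominant $m^{n}$-term driving the Hurwitz limit. I would resolve this by computing the top Laurent coefficient of $Q(u\partial_{u})(1-u)^{-1}$ at $u=1$ directly. The remaining partial-fraction reassembly of $N_{m}$ is routine, as confirmed in the cases $n=1$ (where one obtains $N_{m}(u)=(p+m)-(p-1)u$) and $n=2$ (where $N_{m}(u)=(2p-1)(p-1)(1-u)^{2}+3(p-1)(m+1)(1-u)+(m+1)(m+2)$), in both of which all roots lie in $\{\mathrm{Re}\,u>1\}$ for every $m\geq 0$, so in those cases the Lu Qi-Keng property in fact holds for every $m\geq 1$.
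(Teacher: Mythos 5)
The paper does not actually prove this statement: Theorem B is imported verbatim from Zhang--Yin \cite{Zhang2009} purely for comparison with Theorem A, so there is no internal proof to measure you against. Your argument appears to be correct and self-contained. The Ligocka decomposition over the ball with weight $(1-\|z\|^2)^{(k+m)/p}$, the reduction to $|u|<1$ via Cauchy--Schwarz and AM--GM, the identity $G(u)=\partial_u^m\bigl[Q(u\partial_u)(1-u)^{-1}\bigr]$ with $Q(x)=\prod_{j=1}^n(j+x/p)$, the partial-fraction form of $N_m$, the nonvanishing of the top residue $C_{n+1}=n!/p^n$, and the limit $m^{-n}N_m\to 1/p^n$ all check out; I also verified your closed forms for $n=1$ and $n=2$. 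Structurally this is the same template the present paper uses for $D_{n,m}$: Ligocka's theorem produces a closed-form kernel whose zero set is governed by a degree-$n$ polynomial ($A_{n,m}$ there, $N_m$ here), and the Lu Qi-Keng question becomes a root-location question. The difference is in how the roots are controlled as $m$ grows: the paper exploits the recurrence $A_{n,m+1}=(n+m+1)A_{n,m}+(1-t)A_{n,m}'$ together with the Liu--Wang interlacing lemmas to show that the roots are real, negative, strictly decreasing in $m$, and tend to $-\infty$, which yields the sharp ``if and only if'' threshold $m_0(n)$ and the monotonicity of Theorem \ref{mono}; your Hurwitz argument is softer and gives only eventual zero-freeness, which is exactly the weaker conclusion asserted in Theorem B. Two minor points: the claimed equivalence ``Lu Qi-Keng $\Leftrightarrow$ $G\neq 0$ on $|u|<1$'' requires observing that $u$ actually fills the open disk (take $z=w=0$ and vary $\zeta,\eta$), although only the easy implication is needed here; and you should state explicitly that the termwise differentiation and the interchange with the partial-fraction expansion are justified inside the radius of convergence, which is routine.
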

In spite of apparent  similarity between Theorems A and B,
there are some differences.
First, the number $m_0$ in Theorem B depends on $n$ and $p$,
while $m_0$ in Theorem A depends only on $n$.
Second, Theorem B does not describe whether or not $\Omega_{n,m}^{p,1}$ is a Lu Qi-Keng domain for $m< m_0$.
On the other hand, Theorem A states that $D_{n,m}$ is not a Lu Qi-Keng domain for $m< m_0$.
Moreover the sequence $\{m_0(n)\}_{n=1}^\infty$ is monotonically increasing.
In other words, if $D_{n,m}$ is not a Lu Qi-Keng domain, neither is $D_{n+1,m}$ (Theorem \ref{mono}).\par
The organization of this paper is as follows. We review basic notions and results in Section 2, which will be used for the proof of our main theorems.
In Section 4, we present remaining problems.
\section{Preliminaries}
\subsection{Interlace polynomial}
\begin{dfn}\label{def:int}
 Let $f$ and $g$ be polynomials with only real roots.
We denote the roots of $f(\mbox{resp. }g) $ by $a_1, \cdots, a_n   (\mbox{resp. }b_1, \cdots, b_m )$ , where $a_1 \leq \cdots \leq a_n
(\mbox{resp. } b_1 \leq \cdots \leq b_m )$.
We say that $g$ alternates $f$ if $\deg f=\deg g=n$ and
\begin{eqnarray}\label{int}
 b_1 \leq a_1 \leq b_2 \leq a_2\cdots \leq b_n \leq a_n .
\end{eqnarray}
We say that $g$ interlaces $f$ if $\deg f=\deg g+1=n$ and
\begin{eqnarray}\label{int2}
  a_1 \leq b_1 \leq a_2\cdots \leq b_{n-1} \leq a_n .
\end{eqnarray}
Let $g \preccurlyeq f$ denote that either $g$ alternates $f$ or $g$ interlaces $f$. 
If no equality sign occurs in $(\ref{int})$ $(respectively (\ref{int2}))$
then we say that $g$ strictly alternates $f$ $($respectively $g$ strictly interlaces $f$ $)$.
Let $g  \prec f$ denote that either $g$ strictly alternates $f$ or $g$ strictly interlaces $f$.
\end{dfn}
As a simple consequence of Rolle's Theorem, we have the following lemma.
\begin{lem}\label{roll}
If $f$ is a polynomial with only real roots and all roots are distinct, then $f'\prec f$.
\end{lem}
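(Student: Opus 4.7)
The plan is to apply Rolle's theorem directly and then use a degree-counting argument to conclude that $f'$ has exactly $n-1$ real roots, all of them simple and strictly separated by the roots of $f$.

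First, I would fix notation. Write $\deg f = n$ and list the roots of $f$ as $a_1 < a_2 < \cdots < a_n$, strict inequalities being available by the distinctness hypothesis. Note that $\deg f' = n-1$, so to verify strict interlacing in the sense of \eqref{int2} it suffices to exhibit $n-1$ distinct real roots $b_1 < \cdots < b_{n-1}$ of $f'$ satisfying $a_i < b_i < a_{i+1}$ for $i = 1, \ldots, n-1$.

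Next, I would invoke Rolle's theorem. Since $f$ is a real polynomial, it is $C^\infty$, and $f(a_i) = f(a_{i+1}) = 0$, so for each $i \in \{1, \ldots, n-1\}$ there exists $b_i \in (a_i, a_{i+1})$ with $f'(b_i) = 0$. The open intervals $(a_i, a_{i+1})$ are pairwise disjoint, so the $b_i$'s produced are automatically distinct, and in fact $a_1 < b_1 < a_2 < b_2 < \cdots < b_{n-1} < a_n$.

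Finally, I would close the argument by degree counting: $f'$ has degree exactly $n-1$, and I have just found $n-1$ distinct real roots of $f'$, so these are all of its roots and in particular $f'$ has only real roots. Combined with $\deg f = \deg f' + 1$, this verifies the hypotheses of Definition \ref{def:int} for interlacing, and the strict inequalities $a_i < b_i < a_{i+1}$ upgrade this to strict interlacing, giving $f' \prec f$. There is no real obstacle here; the proof is essentially immediate once one sets up the notation carefully and checks that Rolle's theorem produces enough roots to saturate the degree of $f'$.
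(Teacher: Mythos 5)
Your proof is correct and matches the paper's intent: the paper states this lemma without proof, noting only that it is "a simple consequence of Rolle's Theorem," and your Rolle-plus-degree-counting argument is exactly the standard argument being alluded to.
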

A real polynomial is said to be standard if either it is identically zero or its leading coefficient is positive.
Let $RZ$ denote the set of real polynomials with only real zeros.\par
L. L. Liu and Y. Wang \cite{Liu} proved the following results which play substantial roles in the proofs of our theorems.
\begin{lem}[\mbox{\cite[Lemma 2.5]{Liu}}]\label{liu}
Let $G(x) = c(x)f (x) + d(x)g(x)$ where $G, f, g$ are standard and $c, d$ are
real polynomials. Suppose that $f, g \in RZ$ and $g \prec f$. Then the following holds.\\
$(i)$ If $\deg G \leq \deg g + 1$ and $c(s) > 0$ whenever $g(s) = 0$, then $G \in RZ$ and $g \prec G$.\\
$(ii)$ If $\deg G \leq \deg f$ and $d(r) > 0$ whenever $f (r) = 0$, then $G \in RZ$ and $G \prec f$.\\
The statements also hold if all instances of $\prec$ and $\succ$ are replaced by $\preceq$
and $\succeq$
respectively.
\end{lem}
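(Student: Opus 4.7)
The plan is to deduce the location of the zeros of $G$ from a sign-alternation argument at the roots of one of $f$ or $g$, followed by the intermediate value theorem and an accounting via the leading-coefficient (standardness) hypothesis. For part $(i)$ I would evaluate at the roots of $g$, and for part $(ii)$ at the roots of $f$; the two arguments are parallel, so I describe $(i)$ in detail and then indicate the analogous adaptation for $(ii)$.

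\emph{Strict case of $(i)$.} Assume first that $g \prec f$ is strict and that the roots $s_1 < \cdots < s_m$ of $g$ are distinct. Since $g(s_i)=0$, we have $G(s_i)=c(s_i)f(s_i)$. A direct check from the definition of strict alternation/interlacing together with the standardness of $f$ shows that the values $f(s_1),\dots,f(s_m)$ are nonzero and strictly alternate in sign, with $f(s_m)<0$ and $f(s_1)$ of sign $(-1)^m$ in either sub-case of $g\prec f$. Multiplying by the positive quantities $c(s_i)$ preserves this alternation, so by the intermediate value theorem each interval $(s_i,s_{i+1})$ contains at least one zero of $G$, producing $m-1$ real zeros. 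Since $G$ is standard, $G(+\infty)=+\infty$, while $G(s_m)<0$; this yields an additional real zero in $(s_m,+\infty)$, for a total of at least $m$.

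Combined with $\deg G \le m+1$, this forces $\deg G \in \{m,m+1\}$. If $\deg G = m$, the zeros $t_1<\cdots<t_m$ found above exhaust $G$, and the ordering $s_1<t_1<s_2<t_2<\cdots<s_m<t_m$ exhibits $g$ strictly alternating $G$. If $\deg G = m+1$, then $G(-\infty)$ has sign $(-1)^{m+1}$, opposite to the sign $(-1)^m$ of $G(s_1)$, so one more zero $t_0\in(-\infty,s_1)$ appears, and the ordering $t_0<s_1<t_1<\cdots<s_m<t_m$ exhibits $g$ strictly interlacing $G$. Part $(ii)$ follows the symmetric template: one evaluates $G(r_j)=d(r_j)g(r_j)$ at the roots of $f$, obtains alternation, counts zeros by the same limit/standardness argument, and concludes $G\prec f$.

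\emph{Non-strict case; the main obstacle.} The hard part is passing from the strict case above to the full statement of Lemma \ref{liu}, where equalities are permitted in the interlacing relation and some roots may coincide with roots of $c$ or $d$. My plan is a perturbation argument: choose small $\epsilon>0$ and polynomials $f_\epsilon,g_\epsilon\in RZ$ with distinct roots satisfying $g_\epsilon\prec f_\epsilon$ strictly, such that $f_\epsilon\to f$ and $g_\epsilon\to g$ as $\epsilon\to 0$; apply the strict case to $G_\epsilon:=cf_\epsilon+dg_\epsilon$; then let $\epsilon\to 0$ and invoke the continuous dependence of roots on coefficients to obtain the (possibly non-strict) conclusions $g\preccurlyeq G$ and $G\preccurlyeq f$. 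The delicate verifications are that the perturbations can be chosen while preserving $c>0$ at the roots of $g_\epsilon$ (resp. $d>0$ at the roots of $f_\epsilon$), and that $G_\epsilon$ remains standard of the expected degree for all small $\epsilon$ so that the counting above still yields the right number of real zeros in the limit. This bookkeeping is where I expect essentially all of the technical work to lie.
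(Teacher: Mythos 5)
First, note that the paper does not prove this statement at all: Lemma~\ref{liu} is quoted verbatim from Liu and Wang \cite[Lemma 2.5]{Liu} and used as a black box (in fact only the strict version of part (ii) is ever invoked, in the proof of Theorem~\ref{main}). So there is no internal proof to compare with; your attempt can only be measured against the standard argument. Your treatment of the strict case is essentially that standard argument and is correct: since $g\prec f$ is strict, $f$ is nonzero and alternates in sign at the zeros $s_1<\cdots<s_m$ of $g$, with $f(s_m)<0$ and $f(s_1)$ of sign $(-1)^m$ in both sub-cases; the hypothesis $c(s_i)>0$ transfers this alternation to $G(s_i)=c(s_i)f(s_i)$, the intermediate value theorem gives $m-1$ zeros in the gaps and standardness gives the extreme zero(s), and the degree hypothesis $\deg G\le \deg g+1$ turns the count into $G\in RZ$ together with the correct interlacing pattern. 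The symmetric argument for (ii) (evaluating $d(r_j)g(r_j)$ at the zeros of $f$, with the extra zero now appearing on the left) is also fine in outline.

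The genuine gap is the weak ($\preccurlyeq$) version, which you only sketch, and the perturbation scheme as described does not go through. The hypothesis $\deg G\le \deg g+1$ (resp. $\deg G\le\deg f$) is a hypothesis on $G$ itself, and in the situations where the lemma has content it typically holds only because the leading terms of $c(x)f(x)$ and $d(x)g(x)$ cancel. If you move the roots of $f$ and $g$ while keeping $c,d$ and the identity $G_\epsilon=cf_\epsilon+dg_\epsilon$, this cancellation is generically destroyed, so $\deg G_\epsilon=\max(\deg c+\deg f,\deg d+\deg g)$ can exceed $\deg g_\epsilon+1$; then the strict case cannot be applied to $G_\epsilon$, the surplus zeros of $G_\epsilon$ need not be real, and the limiting argument no longer yields $G\in RZ$ (non-real zeros of $G_\epsilon$ could converge to non-real zeros of $G$). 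This is exactly the ``bookkeeping'' you defer, and it is not a routine verification: either one must perturb in a way that preserves the linear relation with the same $G$ (or re-establish the degree bound for $G_\epsilon$), or one should argue differently. A cleaner standard route for the weak case avoids perturbation altogether: every equality occurring in $g\preccurlyeq f$ equates a root of $g$ with a root of $f$ (adjacent entries in the interlacing chain always come one from each polynomial), hence is a common root of $f$ and $g$ and therefore a root of $G=cf+dg$; factoring the corresponding linear factor out of $f$, $g$ and $G$ simultaneously preserves all the hypotheses (standardness, the degree bound, positivity of $c$ resp.\ $d$ at the remaining roots) and strictly reduces the number of equalities, so one reduces to the strict case and then reinserts the common roots, which weakens the conclusion to $g\preceq G$ (resp. $G\preceq f$) as claimed. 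As it stands, your proposal proves the strict halves but leaves the final sentence of the lemma unproven.
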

\begin{theo}[\mbox{\cite[Theorem 2.1]{Liu}}]\label{liu2}
Let $F,f,g$ be three real polynomials satisfying the following conditions.\\
(a) $F(x)= a(x)f(x)+ b(x)g(x)$, where $a,b$ are two real polynomials, such that $\deg F = \deg f$ or $\deg f +1$.\\
(b) $ f,g \in RZ$ and $g\preccurlyeq f$.\\
(c) $F$ and $g$ have leading coefficients of the same sign.\\
Suppose that $b(r)\leq 0$ whenever $f(r)=0$. Then $F\in RZ$ and $f\preceq F$. In particular, if $g\prec f$ and $b(r)<0$ whenever $f(r)=0$,
then $f \prec F$.
\end{theo}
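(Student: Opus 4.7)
My plan is to evaluate $F$ at each root of $f$ and use an intermediate-value argument to produce one zero of $F$ in each gap between consecutive roots of $f$, and then invoke the degree and leading-coefficient conditions (a) and (c) to place the remaining one or two zeros outside $[a_1,a_n]$. This reduces the problem to a sign count of $g$ at the points $a_i$.

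Label the roots of $f$ and $g$ as $a_1 \leq \cdots \leq a_n$ and $b_1 \leq \cdots \leq b_{\deg g}$. Writing $g(x) = c_g \prod_j (x - b_j)$ and using the inequalities supplied by $g \preccurlyeq f$, a direct factor count shows that in both the alternation ($\deg g = n$) and interlacing ($\deg g = n-1$) cases
$$\operatorname{sgn} g(a_i) \;=\; \operatorname{sgn}(c_g)\,(-1)^{n-i},$$
with the value zero precisely when one of the factors vanishes. Since $F(a_i) = b(a_i)\,g(a_i)$ from (a) and $b(a_i) \leq 0$ by hypothesis, the signs of $F(a_i)$ alternate with the opposite pattern, so the intermediate value theorem yields $n-1$ zeros $r_i \in [a_i, a_{i+1}]$ of $F$.

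To locate the remaining zero(s), I compare the sign of $F$ near $\pm\infty$ with the signs already computed at $a_1$ and $a_n$. Condition (c) fixes the leading coefficient of $F$ to have the same sign as $c_g$, while the sign count gives $\operatorname{sgn} F(a_n) = -\operatorname{sgn}(c_g)$, producing a further zero in $(a_n, +\infty)$. At $-\infty$ the sign of $F$ is $(-1)^{\deg F}\operatorname{sgn}(c_g)$: when $\deg F = n+1$ this opposes $\operatorname{sgn} F(a_1) = (-1)^n \operatorname{sgn}(c_g)$ and yields another zero in $(-\infty, a_1)$; when $\deg F = n$ the two signs agree and no extra zero appears. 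Either way, exactly $\deg F$ real zeros of $F$ have been exhibited, so $F \in RZ$ and the placement of the $r_i$'s is precisely $f \preceq F$. The strict conclusion follows by noting that $g \prec f$ and $b(r) < 0$ at each root $r$ of $f$ turn every inequality in the argument into a strict inequality.

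The main obstacle I expect is dealing with multiple roots of $f$ (or coincidences $b_j = a_i$), where the crude IVT argument does not separate distinct interior sign changes and the sign formula above degenerates. I would overcome this by a perturbation: approximate $f$ by a polynomial $f_\varepsilon \in RZ$ with simple roots while preserving $g \preccurlyeq f_\varepsilon$, apply the argument to $F_\varepsilon = a f_\varepsilon + b g$, and let $\varepsilon \to 0$ using continuity of roots; the limiting inequalities degrade from $\prec$ to $\preccurlyeq$, which is consistent with the two tiers of the stated conclusion.
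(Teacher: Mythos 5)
The paper itself offers no proof of this statement: it is quoted from Liu and Wang \cite[Theorem 2.1]{Liu}, so there is nothing internal to compare against, and your proposal has to be judged on its own. Its core is the standard sign-count argument and that part is correct: the factor count giving $\operatorname{sgn} g(a_i)=\operatorname{sgn}(c_g)(-1)^{n-i}$ holds in both the alternating and interlacing cases, $F(a_i)=b(a_i)g(a_i)$, and together with (a) and (c) the intermediate value theorem places $\deg F$ roots of $F$ exactly where $f\prec F$ requires them. In particular the strict case ($g\prec f$, $b(r)<0$ at every root $r$ of $f$) is fully proved by your argument, since strictness forces the $a_i$ to be simple and distinct from the roots of $g$; this is also the only case the present paper actually invokes, in the proof of Theorem \ref{main2}.

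The gap is in your reduction of the general case ($g\preccurlyeq f$, $b(r)\le 0$) to the strict one. The perturbation you sketch, replacing $f$ by a nearby $f_\varepsilon\in RZ$ with simple roots and $g\preccurlyeq f_\varepsilon$, need not preserve the hypothesis ``$b\le 0$ at the roots of $f_\varepsilon$''. Concretely, if $f$ has a double root at $r_0$ and $b(x)=(x-r_0)^2c(x)$ with $c>0$ near $r_0$, the hypothesis holds for $f$ (because $b(r_0)=0$), but splitting the double root into two distinct simple roots necessarily puts at least one of them where $b>0$, so the strict-case argument cannot be applied to $(f_\varepsilon,g)$. Trying to fix this by also perturbing $b$ (say to $b-\delta$) collides with condition (a): if the leading terms of $a f$ and $b g$ cancel in $F$, the perturbed combination $a f_\varepsilon+(b-\delta)g$ can have degree exceeding $\deg f+1$, again outside the case you proved; one must in addition re-verify (c) and control roots of $F_\varepsilon$ escaping to infinity in the limit. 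The theorem is of course true as stated in \cite{Liu}, and the non-strict configurations (multiple roots of $f$, equalities in $\preccurlyeq$, zeros of $b$ or $g$ at roots of $f$) are exactly where a complete proof must do additional work, e.g.\ by counting zeros with multiplicity directly rather than by a naive perturbation; as written, your proposal leaves the conclusion $F\in RZ$ and $f\preceq F$ in that general case unproved.
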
 
\subsection{Polylogarithm function}
The logarithm $\log t$ is obtained as the analytic continuation of the formula
$$-\log (1-t) = \sum_{k=1}^\infty \frac{t^k}{k}, \quad |t|<1 $$
to $\mathbb C^*$.
The polylogarithm function $Li_s (t)$ is a natural generalization of the right hand side: 
\begin{eqnarray}\label{def:poly}
 Li_s(t)=\sum_{k=1}^\infty \frac{t^k}{k^s}.
 \end{eqnarray}
It converges for $|t|<1$ and any $s\in\Bbb C$.
If $s$ is a negative integer, say $s=-n$, then the polylogarithm function has
the following closed form:
\begin{eqnarray*}
  Li_{-n}(t)&=
\dfrac{ t }{(1-t)^{n+1}}
 \sum_{j=0}^{n-1}  A(n,j+1)t^{j},
\end{eqnarray*}
where $A(n,m)$ is the Eulerian number \cite[eq.(2.17)]{d2010}\begin{eqnarray*}
 A(n,m)=\sum_{\ell=0}^{m} (-1)^\ell \binom{n+1}{\ell} (m-\ell)^n.
\end{eqnarray*}
The first few are
$$\begin{array}{ll}
 Li_{-1}(t)=\dfrac{t}{(1-t)^2}, &  Li_{-2}(t)=\dfrac{t^2 + t}{(1-t)^3}, \\
 Li_{-3}(t)=\dfrac{t^3 + 4t^2+ t}{(1-t)^4}, &  Li_{-4}(t)=\dfrac{t^4 +11t^3 +11t^2 +t}{(1-t)^5}.
\end{array}$$ 
The polynomial $A_n(t) = \sum_{j=0}^{n-1}  A(n,j+1)t^{j}$ is called the Eulerian
polynomial.
We give here known properties which are used later.
\begin{pro}\label{well}
$(i)$ $A_n(t)$ has only real negative simple roots.\\
$(ii)$ $Li_{-n}(t)/t$ has a zero $t_0$ such that $|t_0|<1$
for all $n \geq 3$ (see \cite{Y2010}).
\end{pro}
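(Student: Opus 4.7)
The plan is to prove (i) by induction on $n$, combining the classical recurrence for Eulerian polynomials with Lemma \ref{roll} and Theorem \ref{liu2} just recorded; for (ii) I would simply invoke the cited computation from \cite{Y2010}. The key algebraic tool is the standard recurrence
$$A_{n+1}(t) = (1+nt)\,A_n(t) + t(1-t)\,A_n'(t),$$
which can be verified either from the exponential generating function of the $A_n$ or directly from the Eulerian-number recurrence $A(n+1,m) = m\,A(n,m) + (n-m+2)\,A(n,m-1)$. This latter recurrence also shows inductively that every $A(n,m)$ is a positive integer, so $A_n(t) > 0$ for all $t \geq 0$; in particular any real root of $A_n$ is automatically strictly negative.

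For the induction I would take as base case $A_2(t) = 1 + t$, whose single root $-1$ is simple and negative. Assuming $A_n$ has only simple negative real roots, Lemma \ref{roll} yields $A_n' \prec A_n$. I then apply Theorem \ref{liu2} with $F = A_{n+1}$, $f = A_n$, $g = A_n'$, $a(x) = 1 + nx$, and $b(x) = x(1-x)$. Three conditions must be checked: first, a short computation from the recurrence shows that every $A_n$ is monic, so $A_{n+1}$ and $A_n'$ both have positive leading coefficients; second, $\deg F = n = \deg f + 1$ is immediate; third, for any root $r$ of $A_n$ the induction hypothesis gives $r < 0$, whence $b(r) = r(1-r) < 0$. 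The ``in particular'' clause of Theorem \ref{liu2} then delivers $A_{n+1}\in RZ$ together with $A_n \prec A_{n+1}$, so $A_{n+1}$ has $n$ distinct real roots, and combined with the positivity of its coefficients these roots must all be negative.

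The main point to be cautious about is bookkeeping rather than any serious conceptual obstacle: one must verify monicity of $A_n$ via the recurrence, track the sign conditions needed to invoke Theorem \ref{liu2}, and remember that the interlacing $A_n \prec A_{n+1}$ alone controls only $n-1$ of the $n$ roots of $A_{n+1}$. Positivity of the Eulerian numbers is therefore genuinely needed to upgrade ``real'' to ``negative'' for the largest root, and this is the step where I would be most careful in the write-up.
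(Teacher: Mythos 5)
Your proof of (i) is correct, but be aware that the paper itself does not prove Proposition \ref{well} at all: it is recorded as a known property of Eulerian polynomials (the realness and simplicity of the roots is a classical theorem going back to Frobenius, for which the paper points to \cite{comtet,d2010,H}), and (ii) is simply cited from \cite{Y2010}, exactly as you do. So what you supply is a self-contained derivation from the toolbox already in the paper, and the details check out: the recurrence $A_{n+1}(t)=(1+nt)A_n(t)+t(1-t)A_n'(t)$ is the standard one and is consistent with the normalization $A_n(t)=\sum_{j=0}^{n-1}A(n,j+1)t^j$ used here (it reproduces $A_3=1+4t+t^2$ and $A_4=1+11t+11t^2+t^3$); monicity follows from the recurrence since the top-degree contributions are $n-(n-1)=1$; for a negative root $r$ of $A_n$ one has $b(r)=r(1-r)<0$, so the ``in particular'' clause of Theorem \ref{liu2} yields $A_{n+1}\in RZ$ with strict interlacing and hence simple roots; and your remark that the positivity of the Eulerian numbers, not the interlacing, is what pins down the sign of the largest root is exactly the right caution. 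Your induction in fact mirrors the paper's own proof of Theorem \ref{main}(i), where the analogous recurrence (\ref{zenkasiki}) is combined with Lemma \ref{roll} and the Liu--Wang results (there Lemma \ref{liu}(ii), here Theorem \ref{liu2}); the payoff of your route is that it makes the paper self-contained with a single mechanism, at the cost of having to verify the classical recurrence. Two minor points: treat the first step explicitly (for $n=2$ the polynomial $g=A_2'$ is a nonzero constant, so $g\prec f$ holds only in the degenerate reading of Definition \ref{def:int}), or start the induction at $A_3$; and while citing \cite{Y2010} for (ii) matches the paper, it is worth noting that (ii) also follows from (i) plus the palindromy of $A_n$, since the product of the roots has modulus one, so for $n\geq 3$ simplicity forces a root strictly inside the unit disc.
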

More information about the polylogarithm function and the Eulerian polynomial can be found in \cite{comtet,d2010,H}.
\subsection{Bergman kernel}
Let $\Omega$ be a domain in $\Bbb{C}^n$, $p$ a positive continuous function on $\Omega$ and $L^2_a(\Omega,p)$
the Hilbert space of square integrable holomorphic
functions with respect to the weight function $p$ on $\Omega$ with the inner
product 
$$ \langle f,g \rangle=\int _{\Omega} f(z)\overline{g(z)} p(z) dz,\mbox{\quad
for all $f,g\in \mathcal O(\Omega)$. }$$
The weighted Bergman kernel $K_{\Omega, p}$ of $\Omega$ with respect to the
weight $p$ is the
reproducing kernel of $L^2_a(\Omega,p)$. If $p\equiv 1$, the reproducing kernel is called the  Bergman kernel.\par
Define the Hartogs domain by
$ \Omega_{m,p}:=  \{ (z,\zeta)\in\Omega\times\Bbb{C}^m; ||\zeta||^2 <
p(z)\}$.
E. Ligocka \cite[Proposition 0]{Ligocka1989} showed that
the Bergman kernel of $\Omega_{m,p}$ is expressed as infinite sum of
weighted Bergman kernels of the base domain $\Omega$.
\begin{theo}\label{ligocka}
 Let $K_m$ be the Bergman kernel of $\Omega_ {m ,p }$ and $K_{\Omega,p^k}$
the
weighted Bergman kernel of
$\Omega$ with respect to the weight function $p^k$. Then
\begin{eqnarray*}
 K_m((z,\zeta),(z',\zeta') ) =\dfrac{m!}{\pi^m}\sum_{k=0}^\infty
\dfrac{ (m+1)_k}{k!}
K_{\Omega,p^{k+m}}(z,z')\langle\zeta,\zeta'\rangle^k.
\end{eqnarray*}
Here $(a)_k$ denotes the Pochhammer symbol
$(a)_k =a(a+1)\cdots (a+k-1).  $
\end{theo}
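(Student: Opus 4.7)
The plan is to construct an explicit complete orthogonal system for $L^2_a(\Omega_{m,p})$ adapted to the $U(m)$-symmetry $\zeta\mapsto U\zeta$ of the fiber, and then read off the Bergman kernel from its reproducing-kernel expansion. Concretely, for each integer $k\geq m$ fix an orthonormal basis $\{e^{(k)}_j\}_{j\geq 1}$ of $L^2_a(\Omega,p^k)$, and consider
\begin{equation*}
\phi_{j,\alpha}(z,\zeta):=e^{(m+|\alpha|)}_j(z)\,\zeta^\alpha,\qquad j\in\mathbb N,\ \alpha\in\mathbb N^m.
\end{equation*}
The central claim I would establish is that $\{\phi_{j,\alpha}\}$ is a complete orthogonal system of $L^2_a(\Omega_{m,p})$ with
\begin{equation*}
\|\phi_{j,\alpha}\|^2_{L^2(\Omega_{m,p})}=\frac{\pi^m\alpha!}{(m+|\alpha|)!}.
\end{equation*}
The stated formula then drops out of $K_m=\sum_{j,\alpha}\phi_{j,\alpha}\overline{\phi_{j,\alpha}}/\|\phi_{j,\alpha}\|^2$.

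For orthogonality and the norm computation I would apply Fubini: pairing $\phi_{j,\alpha}$ with $\phi_{j',\alpha'}$ produces the inner integral $\int_{\|\zeta\|^2<p(z)}\zeta^\alpha\overline{\zeta^{\alpha'}}\,d\lambda(\zeta)$ over a ball, which vanishes for $\alpha\neq\alpha'$ by circular symmetry and evaluates via polar coordinates to $\pi^m\alpha!\,p(z)^{m+|\alpha|}/(m+|\alpha|)!$ when $\alpha=\alpha'$. Absorbing the $p^{m+|\alpha|}$ factor into the weight reduces the outer integral to $\langle e^{(m+|\alpha|)}_j,e^{(m+|\alpha|)}_{j'}\rangle_{L^2(\Omega,p^{m+|\alpha|})}=\delta_{jj'}$. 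For completeness, given $f\in L^2_a(\Omega_{m,p})$ I would use the fact that each fiber is a complete Reinhardt domain to expand $f$ as a Hartogs series $f(z,\zeta)=\sum_\alpha f_\alpha(z)\zeta^\alpha$ converging normally on compact subsets of $\Omega_{m,p}$, then apply Fubini together with the fiber integral above to obtain $f_\alpha\in L^2_a(\Omega,p^{m+|\alpha|})$ and expand each $f_\alpha$ in the basis $\{e^{(m+|\alpha|)}_j\}_j$.

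With the orthogonal expansion in hand, I would assemble $K_m$ in two stages. First, summing over $j$ at fixed $\alpha$ recognizes the reproducing-kernel identity
\begin{equation*}
\sum_j e^{(m+|\alpha|)}_j(z)\,\overline{e^{(m+|\alpha|)}_j(z')}=K_{\Omega,p^{m+|\alpha|}}(z,z').
\end{equation*}
Second, grouping $\alpha$ by $|\alpha|=k$ and using the multinomial identity $\sum_{|\alpha|=k}\tfrac{k!}{\alpha!}\zeta^\alpha\overline{\zeta'}^{\alpha}=\langle\zeta,\zeta'\rangle^k$ together with the elementary rearrangement $(m+k)!/k!=m!\,(m+1)_k/k!$ produces the stated closed form. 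The most delicate point is the completeness/termwise-integrability step: one must justify that the Hartogs series of an arbitrary $L^2_a$-function interchanges with integration against each $\phi_{j,\alpha}$, which rests on local uniform convergence of the series on compact subsets of $\Omega_{m,p}$ and on Parseval applied fiberwise on the ball. Every other step is a routine polar-coordinates or multinomial computation.
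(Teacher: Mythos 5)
Your proposal is correct. Note that the paper itself offers no proof of this statement: it is quoted as Proposition 0 of Ligocka's paper \cite{Ligocka1989}, so there is no in-paper argument to compare against. Your derivation --- the orthogonal system $e_j^{(m+|\alpha|)}(z)\zeta^\alpha$, the fiberwise polar-coordinate evaluation $\int_{\|\zeta\|^2<p(z)}|\zeta^\alpha|^2\,d\lambda=\pi^m\alpha!\,p(z)^{m+|\alpha|}/(m+|\alpha|)!$, completeness via the Hartogs series expansion over the balanced fibers, and the final regrouping by $|\alpha|=k$ using $\sum_{|\alpha|=k}\frac{k!}{\alpha!}\zeta^\alpha\overline{\zeta'}^{\alpha}=\langle\zeta,\zeta'\rangle^k$ and $(m+k)!=m!\,(m+1)_k$ --- is the standard Forelli--Rudin/Ligocka argument and is complete; the one point you flag as delicate (termwise integration of the Hartogs series) is handled exactly as you indicate, by local uniform convergence on compact subsets together with fiberwise orthogonality of the monomials.
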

In our previous paper \cite{Y2010} we obtained a formula of the Bergman kernel of FBH domain with the help of Theorem \ref{ligocka}.
\begin{theo}[\mbox{\cite{Y2010}}]
The Bergman kernel $ K_{D_{n,m}}$ of FBH domain is given by
\begin{eqnarray}
 K_{D_{n,m}}((z,\zeta),(z',\zeta') ) =
\dfrac{\mu^n}{\pi^{n+m}}
e^{m\mu \langle z,z'\rangle }\dfrac{d^m}{d t^m} Li_{-n}(t)\lvert_{t=e^{\mu\langle 
z,z'\rangle}\langle \zeta ,\zeta'  \rangle  }
\end{eqnarray}
\end{theo}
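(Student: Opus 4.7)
The plan is to derive the formula by applying Ligocka's representation (Theorem \ref{ligocka}) to the FBH domain, viewed as a Hartogs domain over the base $\Omega=\mathbb{C}^n$ with weight $p(z)=e^{-\mu\|z\|^2}$, and then to recognize the resulting power series as a derivative of the polylogarithm.

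First I would set up the inputs for Theorem \ref{ligocka}. With $p(z)=e^{-\mu\|z\|^2}$ the weight $p^{k+m}$ is the Gaussian $e^{-(k+m)\mu\|z\|^2}$, so the weighted space $L^2_a(\mathbb{C}^n,p^{k+m})$ is (up to rescaling) the classical Fock--Bargmann space. A direct computation using the orthonormal basis of monomials $z^\alpha$ with $\|z^\alpha\|^2=\alpha!\,\bigl(\pi/((k+m)\mu)\bigr)^{n+|\alpha|}/((k+m)\mu)^{|\alpha|}$ (or equivalently invariance under unitary translations) gives
\[
K_{\mathbb{C}^n,p^{k+m}}(z,z')=\left(\frac{(k+m)\mu}{\pi}\right)^{\!n}e^{(k+m)\mu\langle z,z'\rangle}.
\]
Plugging this into Ligocka's formula and pulling out the common factor $e^{m\mu\langle z,z'\rangle}$ yields
\[
K_{D_{n,m}}\bigl((z,\zeta),(z',\zeta')\bigr)=\frac{m!\,\mu^n}{\pi^{n+m}}\,e^{m\mu\langle z,z'\rangle}\sum_{k=0}^{\infty}\frac{(m+1)_k}{k!}(k+m)^n\,t^k,
\]
where $t=e^{\mu\langle z,z'\rangle}\langle\zeta,\zeta'\rangle$. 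Absolute convergence on $D_{n,m}\times D_{n,m}$ follows from the defining inequality $\|\zeta\|^2<e^{-\mu\|z\|^2}$, which gives $|t|<1$.

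The remaining step is to identify this series with $\frac{1}{m!}\frac{d^m}{dt^m}Li_{-n}(t)$. Starting from $Li_{-n}(t)=\sum_{j\geq 1}j^n t^j$ and differentiating $m$ times termwise, then reindexing $j=k+m$, I get
\[
\frac{d^m}{dt^m}Li_{-n}(t)=\sum_{k=0}^{\infty}(k+m)^n\,\frac{(k+m)!}{k!}\,t^k.
\]
The identity $(m+1)_k=(m+k)!/m!$ rewrites $(k+m)!/k!=m!\,(m+1)_k/k!$, so the right-hand side is exactly $m!$ times the series above. Substituting produces the claimed formula.

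The main (and really only) subtlety is the weighted-kernel calculation on $\mathbb{C}^n$: one has to be careful with the normalizations of the Gaussian measures so that the factors of $\mu$ and the powers of $\pi$ line up correctly with Ligocka's prefactor $m!/\pi^m$. Once the Fock--Bargmann kernel is in hand with the correct constants, the rest is a formal power series manipulation that hinges on the Pochhammer/factorial identity and the permissibility of termwise differentiation (justified by absolute convergence on $|t|<1$).
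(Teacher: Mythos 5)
Your proposal is correct and follows the same route the paper attributes to \cite{Y2010}: apply Ligocka's decomposition (Theorem \ref{ligocka}) to the Hartogs domain over $\mathbb{C}^n$ with Gaussian weight $p(z)=e^{-\mu\|z\|^2}$, insert the weighted Fock--Bargmann kernels $K_{\mathbb{C}^n,p^{k+m}}(z,z')=((k+m)\mu/\pi)^n e^{(k+m)\mu\langle z,z'\rangle}$, and resum the resulting series as $\frac{1}{m!}\frac{d^m}{dt^m}Li_{-n}(t)$ via $(m+1)_k=(m+k)!/m!$, with $|t|<1$ guaranteed by the defining inequality of $D_{n,m}$. The only blemish is a typo in the parenthetical monomial norm, which should read $\|z^\alpha\|^2=\pi^n\alpha!\,/((k+m)\mu)^{n+|\alpha|}$; the kernel you then state is nonetheless the correct one, so the argument stands.
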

\section{Lu Qi-Keng problem of $D_{n,m}$}
In \cite{Y2010}, we showed that the Lu Qi-Keng problem of $D_{n,m}$ is reduce to study of zeros of $\frac{d^m}{dt^m}Li_{-n}(t)$:
\begin{pro}\label{pro}
The domain $D_{n,m}$ is a Lu Qi-Keng domain if and only if $\frac{d^m}{dt^m}Li_{-n}(t)$ has no zeros in $|t|<1$. 
\end{pro}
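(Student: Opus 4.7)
The plan is to read off the proposition directly from the explicit formula
\[
K_{D_{n,m}}((z,\zeta),(z',\zeta'))=\frac{\mu^n}{\pi^{n+m}}\,e^{m\mu\langle z,z'\rangle}F\bigl(e^{\mu\langle z,z'\rangle}\langle\zeta,\zeta'\rangle\bigr),\qquad F(t):=\frac{d^m}{dt^m}Li_{-n}(t),
\]
established in the preceding theorem. Since $e^{m\mu\langle z,z'\rangle}$ is nowhere zero, $K_{D_{n,m}}$ vanishes at some pair of points in $D_{n,m}$ exactly when $F$ vanishes somewhere in the image of the map
\[
\Phi\colon D_{n,m}\times D_{n,m}\to\mathbb{C},\qquad \Phi\bigl((z,\zeta),(z',\zeta')\bigr)=e^{\mu\langle z,z'\rangle}\langle\zeta,\zeta'\rangle.
\]
Thus the proposition reduces to the single assertion that $\Phi(D_{n,m}\times D_{n,m})=\{t\in\mathbb{C}:|t|<1\}$.

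For the inclusion $\Phi(D_{n,m}\times D_{n,m})\subset\{|t|<1\}$, I would chain together $|e^{\mu\langle z,z'\rangle}|=e^{\mu\operatorname{Re}\langle z,z'\rangle}$, the trivial bound $\operatorname{Re}\langle z,z'\rangle\le|\langle z,z'\rangle|$, Cauchy--Schwarz applied to both inner products, and the AM--GM inequality $\|z\|\,\|z'\|\le(\|z\|^2+\|z'\|^2)/2$, and then invoke the defining inequalities $\|\zeta\|^2<e^{-\mu\|z\|^2}$ and $\|\zeta'\|^2<e^{-\mu\|z'\|^2}$ of $D_{n,m}$ to conclude
\[
|\Phi|\le e^{\mu(\|z\|^2+\|z'\|^2)/2}\,\|\zeta\|\,\|\zeta'\|<e^{\mu(\|z\|^2+\|z'\|^2)/2}\cdot e^{-\mu\|z\|^2/2}\cdot e^{-\mu\|z'\|^2/2}=1.
\]
For the reverse inclusion, I would set $z=z'=0$, so that $D_{n,m}$ specializes to the product of unit balls in $\mathbb{C}^m$ and $\Phi$ collapses to $\langle\zeta,\zeta'\rangle$; then any target $t_0$ with $|t_0|<1$ can be realized by choosing, say, $\zeta=\sqrt{|t_0|}\,e_1$ and $\zeta'=\sqrt{|t_0|}\,(t_0/|t_0|)\,e_1$ (with the degenerate case $t_0=0$ handled by $\zeta'=0$), both of norm strictly less than $1$.

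Putting the two inclusions together gives the desired equivalence. This argument is essentially elementary; there is no genuine obstacle, but one should take care that surjectivity onto $\{|t|<1\}$ really is attained on $D_{n,m}$ itself (achieved here by evaluating $\Phi$ on the base slice $z=z'=0$) so that a zero of $F$ inside the unit disk truly produces a zero of $K_{D_{n,m}}$ inside $D_{n,m}\times D_{n,m}$.
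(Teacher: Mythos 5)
Your argument is correct, and it is the natural one: the paper itself gives no proof of this proposition (it simply quotes it from \cite{Y2010}), but the reduction via the explicit kernel formula together with the computation that the image of $\Phi\bigl((z,\zeta),(z',\zeta')\bigr)=e^{\mu\langle z,z'\rangle}\langle\zeta,\zeta'\rangle$ on $D_{n,m}\times D_{n,m}$ is exactly the open unit disk is precisely what the cited result rests on, so you have in effect supplied the omitted proof. Both directions are handled properly: the chain of estimates gives $|\Phi|<1$ on $D_{n,m}\times D_{n,m}$, and the slice $z=z'=0$ gives surjectivity onto $\{|t|<1\}$, so a zero of $\frac{d^m}{dt^m}Li_{-n}$ in the disk really is attained by the kernel. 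The only cosmetic point is that with the Hermitian pairing $\langle\zeta,\zeta'\rangle=\sum_j\zeta_j\overline{\zeta'_j}$ your choice $\zeta'=\sqrt{|t_0|}\,(t_0/|t_0|)e_1$ produces $\overline{t_0}$ rather than $t_0$; since the unit disk is invariant under conjugation (or after replacing $t_0/|t_0|$ by $\overline{t_0}/|t_0|$), this does not affect the conclusion.
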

The objective of this section is to describe how the zeros of $\frac{d^m}{dt^m}Li_{-n}(t)$ depend on integers $n$ and $m$.\par
Define a polynomial $A_{n,m}(t)$ by the following equation:
\begin{eqnarray}\label{def:anm}
\dfrac{d^m}{dt^m}Li_{-n}(t) =\dfrac{A_{n,m}(t) }{(1-t)^{n+m+1}}.
\end{eqnarray}
\begin{rem}\label{closed}
There is a closed expression of $A_{n,m}(t)$ as follows:
\begin{eqnarray}
A_{n,m}(t) = m!\sum_{j=0}^{n} (-1)^{n+j}  (m+1)_j S(1+n,1+j)(1-t)^{n-j},
\end{eqnarray}
where $S(\cdot,\cdot)$ denotes the Stirling number of the second kind.
This formula is a simple consequence of the formula \cite[eq. 2.10c]{d2010}:
$$Li_{-n} (t) = \sum_{j=0}^{n}\frac{ (-1)^{n+j}  j!
S(1+n,1+j)   }{(1-t)^{j+1}}  .$$
\end{rem}
\begin{lem}\label{rec}
(i) The polynomial $A_{n,m}(t)$ satisfies the recurrence relation
\begin{eqnarray}\label{zenkasiki}
A_{n,m+1}(t)=(n+m+1)A_{n,m}(t)+ (1-t)A_{n,m}'(t)
\end{eqnarray}
with the initial condition $A_{n,1}(t)=A_{n+1}(t)$. \\
(ii) All coefficients of $A_{n,m}(t)$ are positive.
\end{lem}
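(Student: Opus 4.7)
\textbf{Plan for Lemma \ref{rec}.}

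For part (i), the strategy is to just differentiate the defining equation (\ref{def:anm}) one more time. Starting from $\frac{d^m}{dt^m}Li_{-n}(t) = A_{n,m}(t)/(1-t)^{n+m+1}$, the quotient rule produces
$$\frac{d^{m+1}}{dt^{m+1}}Li_{-n}(t) = \frac{A_{n,m}'(t)(1-t) + (n+m+1)A_{n,m}(t)}{(1-t)^{n+m+2}},$$
after cancelling one factor of $(1-t)$. Comparing with the definition of $A_{n,m+1}(t)$ gives the recurrence at once. For the initial condition, set $m=0$ in (\ref{def:anm}): since $Li_{-n}(t) = tA_n(t)/(1-t)^{n+1}$, we read off $A_{n,0}(t) = tA_n(t)$. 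Applying the recurrence once then gives
$$A_{n,1}(t) = (n+1)tA_n(t) + (1-t)\bigl(A_n(t) + tA_n'(t)\bigr) = (1+nt)A_n(t) + t(1-t)A_n'(t),$$
which is the classical recurrence for Eulerian polynomials $A_{n+1}(t) = (1+nt)A_n(t)+t(1-t)A_n'(t)$. So $A_{n,1}(t)=A_{n+1}(t)$, as required.

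For part (ii), I would proceed by induction on $m$, using the recurrence from part (i). The base case is $A_{n,1}(t)=A_{n+1}(t)$, whose coefficients are the Eulerian numbers $A(n+1,j+1)$, which are classically positive. For the inductive step, write $A_{n,m}(t)=\sum_{k=0}^{n} c_k t^k$ with $c_k>0$. Expanding the recurrence,
$$A_{n,m+1}(t) = (n+m+1)\sum_{k=0}^{n} c_k t^k + (1-t)\sum_{k=1}^{n} k c_k t^{k-1},$$
and collecting powers of $t$ one finds that the coefficient of $t^k$ in $A_{n,m+1}(t)$ equals
$$(n+m+1-k)\,c_k + (k+1)\,c_{k+1},$$
with the convention $c_{n+1}=0$. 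Since $\deg A_{n,m}=n$, only indices $0\le k\le n$ appear, so $n+m+1-k\ge m+1>0$; both summands are therefore nonnegative and the first is strictly positive. Hence every coefficient of $A_{n,m+1}(t)$ is positive, completing the induction.

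The only nontrivial ingredient is the classical identity $A_{n+1}(t)=(1+nt)A_n(t)+t(1-t)A_n'(t)$; everything else is bookkeeping with the quotient rule. I do not foresee any real obstacle, but one should track the degree of $A_{n,m}(t)$ carefully (it stays equal to $n$ for all $m\ge 1$, as the leading-term computation in the inductive step shows), since the positivity argument above relies on having $k\le n$ throughout.
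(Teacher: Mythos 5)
Your proof is correct and follows essentially the same route as the paper: differentiating the defining equation (\ref{def:anm}) for the recurrence, and induction on $m$ with exactly the same coefficient formula $(n+m+1-k)c_k+(k+1)c_{k+1}$ for positivity. The only cosmetic difference is the initial condition, which the paper gets directly from $\frac{d}{dt}Li_{-n}(t)=Li_{-n-1}(t)/t$ rather than via $A_{n,0}(t)=tA_n(t)$ and the classical Eulerian recurrence $A_{n+1}(t)=(1+nt)A_n(t)+t(1-t)A_n'(t)$; both verifications are valid.
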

\begin{proof}
(i) If we differentiate both sides of equation (\ref{def:anm}), then we have
\begin{eqnarray}
\dfrac{d^{m+1}}{dt ^{m+1}} Li_{-n} (t) = \dfrac{(n+m+1)A_{n,m}(t) + (1-t)A_{n,m}'(t)}{(1-t)^{n+m+2}},
\end{eqnarray}
which proves the recurrence relation (\ref{zenkasiki}).
The initial condition is verified from the formula $\frac{d}{dt} Li_{-n}(t)=Li_{-n-1}(t)/t$. \\ 
(ii) Fix $n\in\mathbb{N}$.
We shall use the induction on $m$.
Since the Eulerian number $A(n,k)$ is equal to the number of permutations of $n$ objects with $k-1$ rises (see \cite[p242]{comtet}),
the coefficients of the Eulerian polynomial $A_{n,1}(t) =A_{n+1}(t)$ are all positive.\par
Assume that all coefficients of $A_{n,m} (t)$ are positive.
Put $A_{n,m}(t)=\sum_{i=0}^n a_i t^i$ where $a_i>0$ for all $0\leq i \leq n$.
By (\ref{zenkasiki}), the coefficients of $A_{n,m+1}(t)=\sum_{i=0}^n a' _i t^i$ are expressed as follows:
$$ a'_i =
\begin{cases}
(n+m+1)a_0 +a_1, &\mbox{if $i=0$,}\\
(m+1)a_n, &\mbox{if $i=n$,}\\
(n+m+1-i)a_i + (i+1)a_{i+1}, &\mbox{otherwise.}
\end{cases}
$$
Thus $a'_i$ is positive if $a_i>0$ for each $i$.
This completes the proof of the statement (ii).
\end{proof}
For the proof of our main theorem, we quote the following general result.
\begin{lem}[\mbox{\cite[Corollary 1.2.3]{Book}}]\label{bound}
Suppose $p(t):=a_n t^n + \cdots a_1 t + a_0$ with $a_k >0$ for each $k$.
Then all zeros of $p$ lie in the annulus
$$\min_{0 \leq i \leq n-1 } \{ a_i/ a_{i+1}\} \leq |t|  \leq \max_{0 \leq i \leq n-1} \{a_i/ a_{i+1}\}.$$
\end{lem}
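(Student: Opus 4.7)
The plan is to prove the classical Enestr\"om--Kakeya theorem, establishing the outer bound first and then deriving the inner bound from it by passing to the reciprocal polynomial. First I would set $\beta := \max_{0 \le i \le n-1} a_i/a_{i+1}$ and rescale via $t = \beta s$. The polynomial $\tilde p(s) := p(\beta s) = \sum_{k=0}^n (a_k \beta^k) s^k$ has consecutive coefficient ratios $a_k \beta^k / (a_{k+1} \beta^{k+1}) = a_k/(\beta\, a_{k+1}) \le 1$, so its coefficients form a nondecreasing positive sequence. The upper bound is thereby reduced to the claim that any polynomial $q(s) = \sum_{k=0}^n c_k s^k$ with $0 < c_0 \le c_1 \le \cdots \le c_n$ has all its zeros in $|s| \le 1$.

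For this reduced claim I would use the telescoping identity
$$(1-s)\, q(s) = c_0 + \sum_{k=1}^n (c_k - c_{k-1}) s^k - c_n s^{n+1},$$
in which every coefficient on the right is nonnegative. If $q(s_0) = 0$ with $|s_0| \ge 1$, then isolating $c_n s_0^{n+1}$ and applying the triangle inequality together with $|s_0|^k \le |s_0|^n$ for $0 \le k \le n$ yields
$$c_n |s_0|^{n+1} \le |s_0|^n \Big( c_0 + \sum_{k=1}^n (c_k - c_{k-1}) \Big) = c_n |s_0|^n,$$
forcing $|s_0| \le 1$. Undoing the substitution gives $|t| \le \beta$ at every zero of $p$.

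For the lower bound I would pass to the reciprocal polynomial $p^{\ast}(t) := t^n p(1/t) = \sum_{k=0}^n a_{n-k}\, t^k$, whose roots are the reciprocals of those of $p$. Its maximum consecutive coefficient ratio is $\max_k a_{n-k}/a_{n-k-1} = 1/\alpha$, where $\alpha := \min_{0\le i \le n-1} a_i/a_{i+1}$. Applying the outer bound already established to $p^{\ast}$ gives $|1/t_0| \le 1/\alpha$, i.e.\ $|t_0| \ge \alpha$, for every zero $t_0$ of $p$.

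The only nontrivial step is the triangle-inequality estimate on $(1-s)q(s)$; the rescaling $t = \beta s$ and the passage to $p^{\ast}$ are purely formal. I do not foresee a serious obstacle, since the argument relies only on the positivity of the coefficients and on the telescoping $\sum_{k=1}^n (c_k - c_{k-1}) = c_n - c_0$.
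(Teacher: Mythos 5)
Your proof is correct. Note that the paper does not prove this lemma at all: it is quoted verbatim from Borwein--Erd\'elyi \cite[Corollary 1.2.3]{Book}, so there is no in-paper argument to compare against. What you give is the classical Enestr\"om--Kakeya argument, and each step checks out: the rescaling $t=\beta s$ does turn the coefficients into a nondecreasing positive sequence; the telescoping identity $(1-s)q(s)=c_0+\sum_{k=1}^n(c_k-c_{k-1})s^k-c_ns^{n+1}$ has nonnegative coefficients, so the triangle-inequality step with $|s_0|^k\le|s_0|^n$ yields $c_n|s_0|^{n+1}\le c_n|s_0|^n$ and hence $|s_0|\le 1$ (the case $|s_0|=1$ is admissible since the stated annulus is closed); and since $a_0>0$ rules out a zero at the origin, the passage to the reciprocal polynomial $p^{\ast}(t)=t^np(1/t)$ correctly converts the outer bound $1/\alpha$ for $p^{\ast}$ into the inner bound $\alpha$ for $p$. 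This is a complete, self-contained proof of the cited lemma.
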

Now we are ready to state our main theorem.
\begin{theo}\label{main}
(i) For any $n,m\in\mathbb N$, we have $A_{n,m}(t)\succ A_{n,m+1}(t)$.\\
(ii) For any fixed $n\in \mathbb N$, there exists a unique number $m_0(n)\in \mathbb{N}$ such that
$D_{n,m}$ is a Lu Qi-Keng domain if and only if $m\geq m_0(n)$.
\end{theo}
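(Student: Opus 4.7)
The plan is to prove (i) by induction on $m$ using the recurrence of Lemma \ref{rec}(i), and then to deduce (ii) from (i) together with Lemma \ref{bound}.

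For part (i), the base case $m=1$ is Proposition \ref{well}(i): $A_{n,1}(t)=A_{n+1}(t)$ has only simple negative real roots. For the inductive step I assume $A_{n,m}$ has only simple negative real roots; then Lemma \ref{roll} gives $A_{n,m}'\prec A_{n,m}$. Writing $A_{n,m+1}(t)=(n+m+1)\,A_{n,m}(t)+(1-t)\,A_{n,m}'(t)$ as in Lemma \ref{rec}(i), I apply Lemma \ref{liu}(ii) with $f=A_{n,m}$, $g=A_{n,m}'$, $c(t)=n+m+1$, $d(t)=1-t$, and $G=A_{n,m+1}$. The degree condition $\deg G=n=\deg f$ holds, and $d(r)=1-r>0$ at every (negative) root $r$ of $f$; the required ``standardness'' is provided by Lemma \ref{rec}(ii). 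The lemma therefore yields $A_{n,m+1}\in RZ$ and $A_{n,m+1}\prec A_{n,m}$, i.e.\ the desired $A_{n,m}\succ A_{n,m+1}$. Positivity of the coefficients of $A_{n,m+1}$ (Lemma \ref{rec}(ii)) forces its roots to be negative, and the strict alternation makes them simple, closing the induction.

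For part (ii), Proposition \ref{pro} reduces the Lu Qi-Keng property of $D_{n,m}$ to the absence of zeros of $A_{n,m}(t)$ in $|t|<1$. Since all roots of $A_{n,m}$ are real and negative, this is equivalent to $r_n^{(m)}\leq -1$, where $r_n^{(m)}$ denotes the largest root. The strict alternation supplied by (i) yields $r_n^{(m+1)}<r_n^{(m)}$, so the set of Lu Qi-Keng values of $m$ is upward-closed in $\mathbb{N}$. To show this set is nonempty I invoke Lemma \ref{bound}, which ensures $|r|\geq \min_{0\leq i\leq n-1}\rho_i^{(m)}$ for every root $r$ of $A_{n,m}$, where $\rho_i^{(m)}:=a_i^{(m)}/a_{i+1}^{(m)}$. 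Using the explicit coefficient recurrence from the proof of Lemma \ref{rec}(ii), I claim $\rho_i^{(m)}\to\infty$ as $m\to\infty$ for each fixed $i$, and would prove this by reverse induction on $i$ beginning with $i=n-1$, where the recurrence reduces to $\rho_{n-1}^{(m+1)}=\tfrac{m+2}{m+1}\rho_{n-1}^{(m)}+\tfrac{n}{m+1}$, which forces divergence. Thus $\min_i\rho_i^{(m)}\to\infty$, so eventually every root of $A_{n,m}$ lies strictly outside the closed unit disk; this produces a smallest $m_0(n)$, and monotonicity gives uniqueness of the threshold.

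The main obstacle is the reverse inductive step for $i<n-1$: the coefficient recurrence yields $\rho_i^{(m+1)}=[(n+m+1-i)\rho_i^{(m)}+(i+1)]/[(n+m-i)+(i+2)/\rho_{i+1}^{(m)}]$, and one has to exploit the inductive hypothesis $\rho_{i+1}^{(m)}\to\infty$ to make the denominator term $(i+2)/\rho_{i+1}^{(m)}$ small enough that the telescoping product $\prod_{k\geq m_0}\rho_i^{(k+1)}/\rho_i^{(k)}$ still diverges (via a standard $\prod(1+\Theta(1/k))$ comparison). Everything else is bookkeeping around the interlacing--monotonicity bootstrap provided by part (i).
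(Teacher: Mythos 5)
Your part (i) is essentially the paper's argument: induction on $m$ via the recurrence of Lemma \ref{rec}(i), Lemma \ref{roll}, and Lemma \ref{liu}(ii), with negativity of the roots supplied by Lemma \ref{rec}(ii); the only cosmetic difference is that you carry ``simple negative real roots'' as the induction hypothesis while the paper carries the relation $A_{n,m}\succ A_{n,m+1}$, and both deliver exactly what Lemma \ref{liu}(ii) needs. Part (ii) also follows the paper's skeleton (reduce via Proposition \ref{pro} to the condition $r_{n,m}\leq -1$ on the largest root, get strict monotonicity of $r_{n,m}$ from the strict alternation in (i), and use Lemma \ref{bound} to push all roots to $-\infty$), but you take a genuinely different route at the quantitative step. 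The paper never analyzes the coefficients of $A_{n,m}$ itself: it passes to the shifted polynomial $\widetilde A_{n,m}(t)=A_{n,m}(t+1)/m!=\sum_{j=0}^{n}(m+1)_j S(1+n,1+j)\,t^{n-j}$, whose coefficients are explicit by Remark \ref{closed}, so the ratios $a_i/a_{i+1}=(m+n-i)\,S(1+n,n-i+1)/S(1+n,n-i)$ are visibly linear in $m$ and the divergence is a one-line computation. You instead apply Lemma \ref{bound} directly to $A_{n,m}$ (legitimate, since Lemma \ref{rec}(ii) gives positive coefficients) and extract divergence of the ratios $\rho_i^{(m)}$ from the coefficient recurrence by reverse induction on $i$. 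Your sketch does close: for $i=n-1$ the relation $\rho_{n-1}^{(m+1)}/(m+2)=\rho_{n-1}^{(m)}/(m+1)+n/((m+1)(m+2))$ shows $\rho_{n-1}^{(m)}$ grows at least linearly, and once $\rho_{i+1}^{(m)}\to\infty$ your formula gives $\rho_i^{(m+1)}/\rho_i^{(m)}\geq (n+m+1-i)/(n+m-i+\varepsilon_m)$ with $\varepsilon_m=(i+2)/\rho_{i+1}^{(m)}\to 0$, whose product over $m$ diverges by harmonic comparison; since there are only finitely many $i$, $\min_i\rho_i^{(m)}\to\infty$ as required. So your proof is correct; what it buys is independence from the Stirling-number closed form of Remark \ref{closed}, at the cost of asymptotic bookkeeping that the paper's change of variable $t\mapsto t+1$ renders trivial. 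If you write it up, spell out the $\prod\bigl(1+\Theta(1/m)\bigr)$ comparison rather than leaving it as the stated ``main obstacle.''
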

\begin{proof}
(i)
Fix $n\in\mathbb N$. We prove the theorem by induction on $m$.
Let us first prove the statement for $m=1$.
From Proposition \ref{well}, we know that the Eulerian polynomial $A_{n,1}(t)=A_{n+1}(t)$ has only negative real simple roots.
Combining this fact and Lemma \ref{roll}, we have $A_{n,1} (t) \succ A'_{n,1} (t)$.
Thus the polynomial $G(t)=A_{n,2}(t)$ satisfies the conditions of Lemma \ref{liu}(ii)
with $f(t)=A_{n,1} (t)$, $g(t)=A'_{n,1} (t)$, $c(t)=n+2$ and $d(t)=1-t$. 
Actually, they satisfy the assumption of Lemma \ref{liu}(ii) because $1-r>0$ whenever $A_{n,1}(r)=0$.
Hence $A_{n,2}(t)\in RZ$ and $A_{n,1}\succ A_{n,2}$.
We have proved the statement for $m=1$.\par
Assume $A_{n,m}\succ A_{n,m+1}$. By the definition of interlace, we see that $ A_{n,m+1}(t)$ has only simple roots.
Moreover, from Lemma \ref{rec}, 
all coefficients of $A_{n,m+1}(t)$ are positive, so that all roots of $A_{n,m+1}(t)$ are negative.
Hence the polynomial $G(t)=A_{n,m+2}(t)$ satisfies the conditions of Lemma \ref{liu}(ii)
with $f(t)=A_{n,m+1} (t)$, $g(t)=A'_{n,m+1} (t)$, $c(t)=n+m+2$ and $d(t)=1-t$. Hence $A_{n,m+1}\succ A_{n,m+2}$.
We have thus proved the statement (i).\\
(ii)
Denote the largest root of $A_{n,m}(t)$ by $r_{n,m}$.
Then the definition of interlace implies
$0>r_{n,m}>r_{n,m+1}$.
Let us show $r_{n,m}\rightarrow-\infty$ as $m\rightarrow \infty$.
Put $\widetilde A_{n,m}(t)= A_{n,m}(t+1)/m! =\sum_{j=0}^{n}   (m+1)_j
S(1+n,1+j) t^{n-j}$.
From Remark \ref{closed} and Lemma \ref{bound}, it is enough to show $\min \{ a_i/ a_{i+1}: 1\leq i <n \} \rightarrow \infty$ as $m \rightarrow \infty$ for $\widetilde A_{n,m}(t)$.
It can be shown by simple computation that $a_i/ a_{i+1}=(m+n-i)S(1+n,n-i+1)/S(1+n, n-i)$, which is
a linear polynomial of $m$ with the positive leading coefficient. 
Therefore $r_{n,m}\rightarrow-\infty$ as $m \rightarrow \infty$.
We set $ m_0(n)= \min\{m\in\mathbb N; r_{n,m} \leq -1 \}$.
If $m\geq m_0(n) $ then $r_{n,m}  \leq-1$, so that $D_{n,m}$ is a Lu Qi-Keng domain by Proposition \ref{pro}.
On the other hand, if $m<m_0(n)$ then $r_{n,m} >-1$, and $D_{n,m}$ is not a Lu Qi-Keng domain.
Hence the statement (ii) is proved.
\end{proof}
In Theorem \ref{main}, we have proved $A_{n,m}(t)\succ A_{n,m+1}(t)$.
It is natural to expect similar relation between $A_{n,m}(t)$ and $A_{n+1,m}(t)$.
Indeed, we can prove the following theorem.
\begin{theo}\label{main2}
For any $n,m\in\mathbb N$, we have $A_{n,m}(t) \prec A_{n+1,m}(t)$.
\end{theo}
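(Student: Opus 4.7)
The plan is to derive a polynomial identity expressing $A_{n+1,m}$ as an explicit combination of $A_{n,m}$ and $A_{n,m+1}$, and then feed that identity into Theorem \ref{liu2}. No induction on $n$ or $m$ is required; the argument applies uniformly to each pair $(n,m)$.

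First, I would exploit the polylogarithm relation $Li_{-n-1}(t) = t\cdot\frac{d}{dt}Li_{-n}(t)$ already used in the proof of Lemma \ref{rec}(i). Differentiating $m$ times via Leibniz's rule (only the $k=0$ and $k=1$ terms survive, since higher derivatives of $t$ vanish) gives
\begin{equation*}
\frac{d^m}{dt^m}Li_{-n-1}(t) = t\cdot\frac{d^{m+1}}{dt^{m+1}}Li_{-n}(t) + m\cdot\frac{d^m}{dt^m}Li_{-n}(t).
\end{equation*}
Substituting the defining relation $\frac{d^j}{dt^j}Li_{-k}(t) = A_{k,j}(t)/(1-t)^{k+j+1}$ and clearing the common denominator $(1-t)^{n+m+2}$ yields the polynomial identity
\begin{equation*}
A_{n+1,m}(t) = t\cdot A_{n,m+1}(t) + m(1-t)\cdot A_{n,m}(t).
\end{equation*}

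Next I would apply Theorem \ref{liu2} with $F=A_{n+1,m}$, $f=A_{n,m}$, $g=A_{n,m+1}$ and coefficient polynomials $a(t)=m(1-t)$, $b(t)=t$. One checks that $\deg A_{n,m}=n$ for every $m$ (from the recurrence of Lemma \ref{rec}(i) together with the initial condition $A_{n,1}=A_{n+1}$), so $\deg F = n+1 = \deg f + 1$, as required. Theorem \ref{main}(i) gives $f,g\in RZ$ with $g\prec f$; because both polynomials have degree $n$, this is strict alternation. The leading coefficients of $F$ and $g$ are both positive by Lemma \ref{rec}(ii). Finally, every root $r$ of $f=A_{n,m}$ is strictly negative, since a polynomial with positive coefficients (Lemma \ref{rec}(ii)) cannot vanish for $r\ge 0$; thus $b(r)=r<0$ strictly. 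Theorem \ref{liu2} therefore yields the strict conclusion $f\prec F$, which is exactly $A_{n,m}\prec A_{n+1,m}$.

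The main obstacle is locating the polylogarithm identity linking $A_{n+1,m}$, $A_{n,m+1}$, and $A_{n,m}$; once it is in hand, the verification of the hypotheses of Theorem \ref{liu2} is mechanical. The identity is natural in retrospect because it is the unique first-order linear relation of the correct degree that comes out of the Leibniz expansion of $Li_{-n-1}=t\,Li_{-n}'$, and, crucially, its $b(t)=t$ factor is exactly what pairs with the negativity of the roots of $A_{n,m}$ to activate the strict version of Theorem \ref{liu2}.
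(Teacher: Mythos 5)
Your proposal is correct and follows essentially the same route as the paper: it establishes the identity $A_{n+1,m}(t)=t\,A_{n,m+1}(t)+m(1-t)A_{n,m}(t)$ (the paper's Lemma \ref{lem:rec}, which the paper derives from the series expansion of $\frac{d^m}{dt^m}Li_{-n}(t)$ rather than via Leibniz's rule on $Li_{-n-1}(t)=t\,\frac{d}{dt}Li_{-n}(t)$, an equivalent computation) and then applies Theorem \ref{liu2} with exactly the same choices $f=A_{n,m}$, $g=A_{n,m+1}$, $a(t)=m(1-t)$, $b(t)=t$, using the negativity of the roots of $A_{n,m}$ to get the strict conclusion. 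Your verification of the hypotheses, including the degree count and the sign of leading coefficients, is accurate.
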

For the proof of Theorem \ref{main2}, we need the following lemma.
\begin{lem}\label{lem:rec}
For any $n,m\in\mathbb{N}$, we have
$$A_{n+1,m}(t)= t A_{n,m+1}(t) +m(1-t)A_{n,m}(t)  .$$
\end{lem}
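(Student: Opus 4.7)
My plan is to derive the identity from the single basic relation $t \cdot Li_{-n}'(t) = Li_{-n-1}(t)$, which is immediate from termwise differentiation of the series definition (\ref{def:poly}): $\frac{d}{dt}\sum_{k=1}^{\infty} t^k/k^{-n} = \sum_{k=1}^\infty k^{n+1} t^{k-1}$, and multiplying by $t$ yields $Li_{-n-1}(t)$. This is the only nontrivial input; everything else is bookkeeping via the defining equation (\ref{def:anm}).

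The first step is to apply $d^m/dt^m$ to both sides of $t \cdot Li_{-n}'(t) = Li_{-n-1}(t)$. On the left, Leibniz's rule applied to the product collapses to just two terms, since $t$ has only two nonzero derivatives, yielding
$$\frac{d^m}{dt^m}\bigl[t \cdot Li_{-n}'(t)\bigr] = t \cdot \frac{d^{m+1}}{dt^{m+1}} Li_{-n}(t) + m \cdot \frac{d^m}{dt^m} Li_{-n}(t).$$
On the right one simply obtains $\frac{d^m}{dt^m} Li_{-n-1}(t)$.

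The second step is to translate all three derivatives using (\ref{def:anm}): the right-hand side becomes $A_{n+1,m}(t)/(1-t)^{n+m+2}$, while the two terms on the left become $t \cdot A_{n,m+1}(t)/(1-t)^{n+m+2}$ and $m \cdot A_{n,m}(t)/(1-t)^{n+m+1}$. Multiplying through by the common denominator $(1-t)^{n+m+2}$ produces exactly $A_{n+1,m}(t) = t A_{n,m+1}(t) + m(1-t)A_{n,m}(t)$. I do not expect any real obstacle; the only point to watch is that the exponent of $(1-t)$ in the denominator attached to $A_{n,m}$ is one less than that attached to $A_{n,m+1}$ and $A_{n+1,m}$, and this is precisely why the factor $(1-t)$ appears in front of $A_{n,m}$ after clearing.
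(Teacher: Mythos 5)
Your proposal is correct and is essentially the paper's argument: both proofs come down to the derivative identity $\frac{d^m}{dt^m}Li_{-n-1}(t)=m\,\frac{d^m}{dt^m}Li_{-n}(t)+t\,\frac{d^{m+1}}{dt^{m+1}}Li_{-n}(t)$ (the paper's relation (\ref{rec2})) followed by clearing the powers of $1-t$ via (\ref{def:anm}). The only cosmetic difference is that you obtain this relation by applying Leibniz's rule to $t\,Li_{-n}'(t)=Li_{-n-1}(t)$, whereas the paper verifies it directly from the series expansion of the $m$-th derivative.
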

\begin{proof}
 We see from (\ref{def:poly}) that the $m$-th derivative of the polylogarithm function has the following series representation:
\begin{eqnarray}
F_{n,m}(t) =\dfrac{d^m Li_{-n} (t) }{d t^m} =  \sum_{k=0}^\infty (k+1)_m
(k+m)^{n}t^k, 
\end{eqnarray}
for $|t|<1$.
Then it is easy to see
\begin{eqnarray}\label{rec2}
F_{n+1,m}(t)= mF_{n,m}(t)+ t F_{n,m+1}(t).
\end{eqnarray}
The relation (\ref{rec2}) and the definition of $A_{n,m}$ (see (\ref{def:anm})) imply
$$A_{n+1,m}(t)= m(1-t)A_{n,m}(t)+ t A_{n,m+1}(t) .$$
\end{proof}
\begin{proof}[Proof of Theorem \ref{main2}.]
We already know that $A_{n,m}(t)$ has only negative roots.
From Theorem \ref{main} (i), Lemma \ref{lem:rec} and this fact, we see that
the polynomial $G(t)=A_{n+1,m}(t)$ satisfies the conditions of Theorem \ref{liu2} with
$f(t)=A_{n,m}(t), g(t)=A_{n,m+1}(t), a(t)=m(1-t),b(t)=t$. In particular we have $g \prec f$ and $b(r)<0$ whenever $f(r)=0$.
Therefore we finally obtain $A_{n,m}(t) \prec A_{n+1,m}(t)$.
\end{proof}
We now obtain the following theorem:
\begin{theo}\label{mono}
The sequence  $\{m_0(n)\}_{n=1}^\infty$ is monotonically increasing.
In other words, if $D_{n,m}$ is not a Lu Qi-Keng domain, neither is $D_{n+1,m}$.
\end{theo}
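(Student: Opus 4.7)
The plan is to deduce the monotonicity directly from the strict interlacing relation $A_{n,m}\prec A_{n+1,m}$ established in Theorem~\ref{main2}, combined with the root-based characterization of $m_0(n)$ extracted in the proof of Theorem~\ref{main}(ii). The key observation is that $\prec$ between two polynomials whose degrees differ by one forces a strict comparison of their largest roots.

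First I would note, using Remark~\ref{closed}, that $\deg A_{n,m}=n$ and $\deg A_{n+1,m}=n+1$, so the relation $A_{n,m}\prec A_{n+1,m}$ falls into the strict interlacing case of Definition~\ref{def:int}. Denoting the roots of $A_{n+1,m}$ by $\alpha_1<\cdots<\alpha_{n+1}$ and those of $A_{n,m}$ by $\beta_1<\cdots<\beta_n$, strict interlacing gives
$$\alpha_1<\beta_1<\alpha_2<\cdots<\beta_n<\alpha_{n+1},$$
so in particular the largest roots satisfy $r_{n,m}=\beta_n<\alpha_{n+1}=r_{n+1,m}$.

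Next I would invoke the characterization $m_0(n)=\min\{m\in\mathbb{N}:r_{n,m}\leq-1\}$ together with the equivalence ``$D_{n,m}$ is Lu Qi-Keng $\iff r_{n,m}\leq-1$'', both obtained in the proof of Theorem~\ref{main}(ii) via Proposition~\ref{pro}. If $D_{n,m}$ is not Lu Qi-Keng, then $r_{n,m}>-1$, and the inequality above yields $r_{n+1,m}>r_{n,m}>-1$, so $D_{n+1,m}$ is not Lu Qi-Keng either; equivalently $m_0(n)\leq m_0(n+1)$ for every $n$, which is the asserted monotonicity.

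I do not anticipate any serious obstacle: the argument is essentially a bookkeeping step from facts already in hand. The only point meriting care is recognizing that the strictness in $\prec$, combined with the fact that the degrees of $A_{n,m}$ and $A_{n+1,m}$ differ by exactly one, already forces the strict inequality $r_{n,m}<r_{n+1,m}$, with no further root-localization work required.
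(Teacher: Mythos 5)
Your argument is correct and is essentially the paper's own proof: the paper likewise deduces from Theorem~\ref{main2} that $-1<r_{n,m}<r_{n+1,m}<0$ whenever $D_{n,m}$ fails to be Lu Qi-Keng, and then concludes via the characterization of $m_0(n)$ through the largest root $r_{n,m}$ and Proposition~\ref{pro}. Your added remark that the strictness of $\prec$ together with $\deg A_{n+1,m}=\deg A_{n,m}+1$ forces $r_{n,m}<r_{n+1,m}$ is just a more explicit spelling-out of the same step.
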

\begin{proof}
If $D_{n,m}$ is not a Lu Qi-Keng domain, then we have $r_{n,m}>-1$.
Theorem \ref{main2} implies that $-1< r_{n,m} <r_{n+1,m}<0 $.
Hence $D_{n+1,m}$ is not a Lu Qi-Keng domain.
\end{proof}
\section{Remaining problems}
Some numerical computation of $m_0(n)$ by Mathematica indicates the following conjecture.
\begin{conj}
The sequence $\{m_0(n)\}_{n=1}^\infty$ is
strictly monotonically increasing.
\end{conj}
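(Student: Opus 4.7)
The plan is to reduce the strict monotonicity to a sign statement at $t=-1$. Since Lemma~\ref{rec}(ii) gives $A_{n+1,m_0(n)}(0)>0$, it suffices to show $A_{n+1,m_0(n)}(-1)<0$: the intermediate value theorem then places a zero of $A_{n+1,m_0(n)}$ in $(-1,0)$, yielding $r_{n+1,m_0(n)}>-1$ and, by the definition of $m_0$ together with Proposition~\ref{pro}, the strict inequality $m_0(n+1)>m_0(n)$.

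Eliminating $A_{n,m+1}$ between Lemma~\ref{rec}(i) and Lemma~\ref{lem:rec} yields the identity
$$A_{n+1,m}(t)=\bigl[(n+1)t+m\bigr]A_{n,m}(t)+t(1-t)A'_{n,m}(t),$$
which at $t=-1$ reads $A_{n+1,m}(-1)=(m-n-1)A_{n,m}(-1)-2A'_{n,m}(-1)$. Theorem~\ref{main}(i) makes the roots of $A_{n,m_0(n)}$ simple and contained in $(-\infty,-1]$, so Rolle's theorem puts the roots of $A'_{n,m_0(n)}$ strictly less than $-1$; hence $A'_{n,m_0(n)}(-1)>0$. Two sub-cases close immediately: if $r_{n,m_0(n)}=-1$ exactly, then $A_{n,m_0(n)}(-1)=0$ forces $A_{n+1,m_0(n)}(-1)=-2A'_{n,m_0(n)}(-1)<0$; and if $m_0(n)\le n+1$, then $(m_0(n)-n-1)A_{n,m_0(n)}(-1)\le 0$ while the derivative term remains strictly negative, so again $A_{n+1,m_0(n)}(-1)<0$. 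These sub-cases already handle $n=1,2$ (where $m_0(1)=1$, $m_0(2)=3$).

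The main obstacle is the remaining case $r_{n,m_0(n)}<-1$ together with $m_0(n)>n+1$, which already occurs at $n=3$ (where $m_0(3)=6$). Here one must establish the quantitative inequality $(m_0(n)-n-1)A_{n,m_0(n)}(-1)<2A'_{n,m_0(n)}(-1)$, equivalently
$$\sum_{i=1}^{n}\frac{2}{|r_i|-1}>m_0(n)-n-1,$$
where $r_1<\cdots<r_n$ are the roots of $A_{n,m_0(n)}$. Heuristically the term from the largest root $r_n$ alone must carry the inequality, since the minimality of $m_0(n)$ forces $|r_n|-1$ to be very small. My proposed approach is to extend $A_{n,m}$ to a real parameter $m\ge 0$ via the closed form of Remark~\ref{closed}, verify that the largest root depends continuously on $m$ on $[m_0(n)-1,m_0(n)]$, locate the unique $m^\ast$ there with $r_{n,m^\ast}=-1$ (so that the identity gives $A_{n+1,m^\ast}(-1)=-2A'_{n,m^\ast}(-1)<0$), and then show that $A_{n+1,m}(-1)$ remains negative on the short interval $[m^\ast,m_0(n)]$ of length at most $1$ by a monotonicity or differential-inequality argument in the real parameter $m$. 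Producing this quantitative control --- essentially a bound of the form $|r_{n,m_0(n)}+1|<2/(m_0(n)-n-1)$ --- is where the real difficulty lies.
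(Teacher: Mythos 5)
This statement is left as an open conjecture in the paper: the author proves only the non-strict monotonicity (Theorem \ref{mono}) and supports strictness by numerical data for $n\le 15$, so there is no proof of record to compare against. Your reduction is nevertheless sound and, in fact, sharp. The identity $A_{n+1,m}(t)=[(n+1)t+m]A_{n,m}(t)+t(1-t)A'_{n,m}(t)$ does follow from eliminating $A_{n,m+1}$ between Lemma \ref{rec}(i) and Lemma \ref{lem:rec}; the sign analysis at $t=-1$ is correct, since all roots of $A_{n,m_0(n)}$ lie in $(-\infty,-1]$ and those of its derivative strictly to the left of $-1$, giving $A_{n,m_0(n)}(-1)\ge 0$ and $A'_{n,m_0(n)}(-1)>0$; and because the strict interlacing of Theorem \ref{main2} forces exactly one root of $A_{n+1,m_0(n)}$ to exceed $r_{n,m_0(n)}$, your condition $A_{n+1,m_0(n)}(-1)<0$ is actually \emph{equivalent} to $m_0(n+1)>m_0(n)$, not merely sufficient. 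The sub-cases $m_0(n)\le n+1$ and $r_{n,m_0(n)}=-1$ are correctly disposed of.

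However, the main case is not proved, and you say so yourself. The inequality $\sum_{i}2/(|r_i|-1)>m_0(n)-n-1$ --- or the stronger single-root bound $|r_{n,m_0(n)}+1|<2/(m_0(n)-n-1)$ --- requires quantitative control on how far the largest root of $A_{n,m}$ can overshoot $-1$ at the first integer $m$ where it crosses, and nothing in the paper or in your sketch supplies this. The closing paragraph does not close the gap: locating $m^\ast$ with $r_{n,m^\ast}=-1$ already needs continuity and strict monotonicity of $r_{n,m}$ in a \emph{real} parameter $m$ (the paper proves $r_{n,m}>r_{n,m+1}$ only for integer increments), and the assertion that $A_{n+1,m}(-1)$ ``remains negative on $[m^\ast,m_0(n)]$'' is precisely the conjecture restated in continuous form; no monotonicity or differential inequality in $m$ is established, and it is not obvious that one holds, since $(m-n-1)A_{n,m}(-1)$ grows with $m$ while the competing term $-2A'_{n,m}(-1)$ must be tracked simultaneously. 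As it stands, the proposal is a clean and apparently lossless reduction of the conjecture to a sharper quantitative conjecture about the root nearest $-1$, but it is not a proof.
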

\begin{table}[H]
\begin{center}
\begin{tabular}{|c|c|c|c|c|c|c|c|c|c|c|c|c|c|c|c|}
\hline
$n$ & 1 & 2 & 3 & 4 & 5 & 6 & 7 & 8 &9 &10&11&12&13&14&15\\
\hline 
$m_0(n)$ & 1 & 3 &6& 8&11&14& 17 &20 &23& 26&29&32&35&38&42\\
\hline
\end{tabular}
\caption{The values of $m_0(n) $}
\end{center}
\end{table}
Our sequence $\{m_0(n)\}_{n=1}^\infty$ and the sequence A050503 in the On-Line Encyclopedia of Integer Sequences
suggest the following conjecture.
\begin{conj}
$m_0(n)\leq [ (n+1) \log(n+1) ]$,
where $ [x]$ denotes the nearest integer of $x$. In particular, equality holds if and only if $n\leq 10$.
\end{conj}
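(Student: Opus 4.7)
My plan reduces Conjecture 2 to a precise sign-change analysis of $A_{n,m}(-1)$ and splits the two halves of the statement. For both, the starting point is the closed form from Remark \ref{closed},
$$A_{n,m}(-1) \;=\; \sum_{j=0}^n (-1)^{n-j}\,S(n+1,j+1)\,(m+j)!\;2^{n-j}.$$
Since Theorem \ref{main}(i) gives the strict interlacing $A_{n,m+1}\prec A_{n,m}$, at most one root of $A_{n,m_0(n)-1}$ can lie in $(-1,0)$, so $A_{n,m_0(n)-1}(-1)<0$ while $A_{n,m_0(n)}(-1)\ge 0$. Consequently, establishing $m_0(n)\le M$ amounts to showing $A_{n,M}(-1)\ge 0$ (combined with verifying, via the interlacing chain, that only the largest root could be in $(-1,0)$).

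The core analytic step is to estimate this alternating sum for $M = [(n+1)\log(n+1)]$. A naive bound from Lemma \ref{bound} applied to $\widetilde A_{n,m}(t)=A_{n,m}(t+1)/m!$ only yields $m_0(n)=O(n^2)$, because the two extreme coefficients (those for $j=n$ and $j=n-1$) balance when $m \asymp n^2$. To recover the correct logarithmic scaling I would instead locate the dominant index of the sum: the consecutive-term ratio
$$\frac{S(n+1,j+1)\,(m+j)!\,2^{n-j}}{S(n+1,j)\,(m+j-1)!\,2^{n-j+1}} \;=\; \frac{(m+j)\,S(n+1,j+1)}{2\,S(n+1,j)}$$
equals $1$ near the true sign-change index. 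Using the asymptotics $S(n+1,k)\sim k^n/k!$ for $k$ small combined with a saddle-point argument on the exponential generating function $\sum_n S(n+1,k+1)\,x^n/n! = e^x(e^x-1)^k/k!$, one aims to show that the critical $m$ lies precisely at $(n+1)\log(n+1)+O(1)$; the logarithm originates from the peak location $k\sim n/\log n$ of $k^n/k!$ in $k$.

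For the equality case $n\le 10$, the recurrence of Lemma \ref{rec}(i) allows an effective computation of $A_{n,m}(t)$ and verification that $r_{n,m}>-1$ exactly when $m<[(n+1)\log(n+1)]$, matching the table. For the strict inequality when $n\ge 11$, the task is to sharpen the asymptotic expansion of $A_{n,m}(-1)$ by one order and show that the correction is strictly positive and larger than the rounding gap $|(n+1)\log(n+1)-[(n+1)\log(n+1)]|$. Concretely, one would prove $A_{n,[(n+1)\log(n+1)]-1}(-1)\ge 0$ for every $n\ge 11$, reducing the claim to a single inequality among Stirling numbers and factorials.

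The main obstacle is obtaining the asymptotic constant with the precision required to pin down the \emph{nearest integer} of $(n+1)\log(n+1)$, rather than merely showing $m_0(n) = \Theta(n\log n)$. The conjectured bound is extremely tight — the difference $[(n+1)\log(n+1)] - m_0(n)$ grows very slowly (e.g.\ it equals $2$ at $n=15$) — so a proof must control subdominant terms in the alternating sum $A_{n,m}(-1)$ uniformly in $n$. Moreover, the sharp threshold ``$n\le 10$ versus $n\ge 11$'' has the flavour of an arithmetic coincidence that is unlikely to be captured by a single asymptotic argument; a complete proof will probably combine a uniform asymptotic estimate valid for all $n$ beyond some computable $n_0$ together with direct verification for the finite window $n\le n_0$.
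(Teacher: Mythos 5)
First, a point of calibration: the paper does not prove this statement. It appears as Conjecture~2, supported only by the numerical table of $m_0(n)$ for $n\le 15$ and a comparison with OEIS sequence A050503, so there is no proof in the paper to compare your attempt against, and your proposal would need to stand entirely on its own. As written it is a research plan rather than a proof, and it has genuine gaps. The most concrete one is the reduction in your first paragraph. Since $A_{n,m}$ has positive leading coefficient and only negative roots, the sign of $A_{n,m}(-1)$ is $(-1)^{c(m)}$, where $c(m)$ is the number of roots in $(-1,0)$; hence $A_{n,M}(-1)\ge 0$ only says that $c(M)$ is even, not that $c(M)=0$, which is what $m_0(n)\le M$ requires. (For $m=1$ the polynomial is the Eulerian polynomial, whose roots pair off as $\rho,1/\rho$, so $c(1)$ is roughly $n/2$ and the parity trap is real.) The alternation chain from Theorem~\ref{main}(i) does show that $c$ is non-increasing in $m$ and drops by at most one per step, which validates your claim that $c(m_0(n)-1)=1$ and $A_{n,m_0(n)-1}(-1)<0$; but the supplement you relegate to a parenthesis --- that at $m=M$ only the largest root could lie in $(-1,0)$ --- is exactly as hard as locating $r_{n,M}$ itself and is not supplied.

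Second, the analytic core is entirely deferred. The saddle-point analysis of the alternating sum $\sum_j(-1)^{n-j}S(n+1,j+1)(m+j)!\,2^{n-j}$ is sketched only at the level of ``the logarithm originates from the peak of $k^n/k!$,'' and even if executed it would at best yield $m_0(n)=(n+1)\log(n+1)+O(1)$. The conjecture demands much more: the inequality $m_0(n)\le[(n+1)\log(n+1)]$ needs an error term beating the distance from $(n+1)\log(n+1)$ to the nearest half-integer, for which there is no uniform lower bound (a Diophantine obstruction your plan does not address), and the ``equality iff $n\le 10$'' clause needs $[(n+1)\log(n+1)]-m_0(n)\ge 1$ for every $n\ge 11$, a gap which by the paper's own data is only $2$ at $n=15$. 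None of these estimates is established, and the finite verification for $n\le 10$ is asserted rather than performed. Your proposal does contain a correct and useful diagnosis --- the closed form for $A_{n,m}(-1)$ follows from Remark~\ref{closed}, and the naive coefficient bound of Lemma~\ref{bound} really does stall at $m_0(n)=O(n^2)$ because of the ratio $S(n+1,n+1)/S(n+1,n)\sim 2/n^2$ --- but it does not prove the conjecture, which remains open.
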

Put $f(n)=[(n+1) \log(n+1)]$. Then the first few of $f(n)$ are given as follows.
\begin{table}[H]
\begin{center}
\begin{tabular}{|c|c|c|c|c|c|c|c|c|c|c|c|c|c|c|c| }
\hline
$n$ & 1 & 2 & 3 & 4 & 5 & 6 & 7 & 8 &9 &10&11&12&13&14&15\\
\hline 
$f(n)$  & 1 &3& 6&8&11& 14 &17 &20& 23&26&30&33&37&41&44\\
\hline
\end{tabular}
\caption{The values of $ f(n)$}
\end{center}
\end{table}
In the proof of Theorem \ref{main}, we showed  $\lim_{m\rightarrow \infty}r_{n,m}=-\infty.$
On the other hand, we do not succeed in computing the value $\lim_{n\rightarrow \infty}r_{n,m}$.
Some numerical computation indicate the following conjecture.
\begin{conj}\label{rmn}
 $\lim_{n\rightarrow \infty}r_{n,m}=0$
\end{conj}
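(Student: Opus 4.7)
The plan is to derive an asymptotic for $F_{n,m}(t):=\frac{d^m}{dt^m}Li_{-n}(t)$ as $n\to\infty$ for a fixed $t\in(-1,0)$, to show it changes sign for infinitely many $n$, and to combine this with the strict monotonicity $r_{n,m}<r_{n+1,m}$ supplied by Theorem \ref{main2} in order to force $r_{n,m}\to 0^-$. The starting point is the exponential generating function
\[
\sum_{n=0}^{\infty} F_{n,m}(t)\,\frac{x^n}{n!}=\frac{m!\,e^{mx}}{(1-te^x)^{m+1}},\qquad m\ge 1,
\]
obtained by summing $Li_{-n}(t)=\sum_{k\ge 1}k^n t^k$ geometrically in $n$ (giving $\sum_n Li_{-n}(t)x^n/n!=te^x/(1-te^x)$) and then differentiating $m$ times in $t$.

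Fix $t=-s$ with $s\in(0,1)$. As a function of complex $x$, the right-hand side above has poles of order $m+1$ precisely at $x_k=\log(1/s)+(2k+1)i\pi$, $k\in\mathbb Z$. The two closest to the origin form the conjugate pair $x_\pm=\alpha\pm i\pi$ with $\alpha=\log(1/s)>0$; they lie at distance $\rho=\sqrt{\alpha^2+\pi^2}$ with argument $\theta=\arctan(\pi/\alpha)\in(0,\pi/2)$, while the next pair sits at distance $\rho_1=\sqrt{\alpha^2+9\pi^2}>\rho$. A standard residue-type singularity analysis (Laurent-expand $(1-te^x)^{-(m+1)}$ about $x_\pm$, then geometric-expand in $x/x_\pm$) then yields
\[
F_{n,m}(-s)=2\,m!\binom{n+m}{m}n!\,\frac{(-1)^m e^{m\alpha}}{\rho^{n+m+1}}\cos\bigl(\theta(n+m+1)\bigr)+O\!\left(\frac{n!}{\rho_1^{\,n}}\right),
\]
and since $\theta/\pi\in(0,1/2)$, the cosine factor is negative and bounded away from $0$ on a positive-density subset of $n$. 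Hence $F_{n,m}(-s)<0$ for infinitely many $n$.

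To conclude, $A_{n,m}(-s)=(1+s)^{n+m+1}F_{n,m}(-s)$, so whenever $F_{n,m}(-s)<0$ the polynomial $A_{n,m}$ changes sign on $(-s,0)$ (recall $A_{n,m}(0)=m!\,m^n>0$), forcing a root in that interval and hence $r_{n,m}>-s$. By Theorem \ref{main2}, $\{r_{n,m}\}_n$ is strictly increasing, so $r_{n,m}>-s$ on an infinite subsequence already gives $r_{n,m}>-s$ for all sufficiently large $n$. Since $s\in(0,1)$ was arbitrary and $r_{n,m}<0$ always, this yields $\lim_{n\to\infty} r_{n,m}=0$.

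The main obstacle is the rigorous error control in the singularity analysis: the leading residue calculation is routine, but one must uniformly bound the contribution of the infinitely many distant poles $x_k$, $|k|\ge 1$. The cleanest route is to apply Cauchy's formula on a circle of radius $r\in(\rho,\rho_1)$, on which $|1-te^x|$ is bounded below; pulling the contour across only the two dominant poles and invoking the residue theorem produces the displayed main term plus a remainder of size $O(r^{-n})$, which is dwarfed by the oscillatory main term. A secondary point to verify is that the cosine factor does not vanish for all large $n$, which is immediate since $\theta\in(0,\pi/2)$ guarantees that $\cos(\theta k)$ takes negative values bounded away from $0$ on a set of integers of positive density, irrespective of whether $\theta/\pi$ is rational.
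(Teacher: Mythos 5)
You should first be aware that the paper contains no proof of this statement: Conjecture \ref{rmn} is left open there, supported only by numerical experiments (the author explicitly says he did not succeed in computing $\lim_{n\to\infty}r_{n,m}$). So there is no argument of the paper to compare yours with; what you have written is a proposed resolution of an open conjecture, and it should be judged on its own.

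On its merits, your strategy looks sound and I see no fatal gap. The generating function identity $\sum_{n\ge 0}F_{n,m}(t)x^n/n!=m!\,e^{mx}(1-te^{x})^{-(m+1)}$ is correct (it follows at once from $F_{n,m}(t)=\sum_{k\ge0}(k+1)_m(k+m)^n t^k$), for $t=-s$ the poles are exactly at $\log(1/s)+(2k+1)i\pi$, and subtracting the principal parts at the dominant conjugate pair and estimating the rest by Cauchy's formula on a circle of radius $r\in(\rho,\rho_1)$ does produce an oscillatory main term; combined with the strict inequality $r_{n,m}<r_{n+1,m}$, which Theorem \ref{main2} indeed supplies (this is how Theorem \ref{mono} is proved), negativity of $F_{n,m}(-s)$ along a subsequence forces $r_{n,m}>-s$ for all large $n$, and letting $s\downarrow 0$ finishes. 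Two points need repair before this is a proof. First, sign bookkeeping: since $e^{mx_\pm}=(-1)^m e^{m\alpha}$, the sign of your main term is governed by $(-1)^m\cos\bigl(\theta(n+m+1)\bigr)$, not by the cosine alone; for odd $m$ you need $n$ with the cosine \emph{positive} and bounded away from zero. This is harmless, because both signs of $\cos(\theta k)$ occur with positive density whether $\theta/\pi$ is irrational (Weyl equidistribution) or rational (then $\theta=\pi p/q$ with $q\ge3$, so the orbit of $k\theta$ mod $2\pi$ is a finite equally spaced set with gap at most $2\pi/3<\pi$, hence meets both open arcs where $\cos$ has a fixed sign), but as written the inference ``cosine negative, hence $F_{n,m}(-s)<0$'' fails for odd $m$. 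Second, the stated remainder $O(n!/\rho_1^{\,n})$ is only valid if you subtract the \emph{full} principal parts (Laurent orders $1$ through $m+1$) at $x_\pm$; keeping only the top-order term leaves a correction of size about $n!\,n^{m-1}\rho^{-n}$, smaller than the main term only by a factor $1/n$. That is still negligible on the set where the cosine is bounded away from zero, so the conclusion survives, but the asymptotic formula should be stated accordingly. With these fixes, and the contour estimate written out, your argument appears complete; note it would also settle the paper's final conjecture on the existence of $n_0(m)$, which the author derives from Conjecture \ref{rmn}.
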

If we admit that the Conjecture \ref{rmn} is true, we have the following:
\begin{conj}
For any fixed $ m\in \mathbb{N}$, there exists a unique number $n_0(m)\in \mathbb{N}$ such that 
$D_{n,m}$ is not a Lu Qi-Keng domain if and only if $n\geq n_0(m)$.
\end{conj}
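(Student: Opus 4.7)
Since the conjecture is introduced with the clause ``If we admit that Conjecture \ref{rmn} is true'', I treat it as a conditional assertion and take the limit $\lim_{n\to\infty}r_{n,m}=0$ as given. The plan is then to mirror, with the roles of $n$ and $m$ swapped (and ``Lu Qi-Keng'' negated), the argument that deduced Theorem \ref{main}(ii) from $\lim_{m\to\infty}r_{n,m}=-\infty$.

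Fix $m\in\mathbb{N}$. First I would collect three facts. (a) By Lemma \ref{rec}(ii) all coefficients of $A_{n,m}(t)$ are positive, so every root of $A_{n,m}$ is negative; in particular $r_{n,m}<0$ for every $n$. (b) From Theorem \ref{main2} one has the strict interlacing $A_{n,m}(t)\prec A_{n+1,m}(t)$; since $\deg A_{n+1,m}=\deg A_{n,m}+1$ and both polynomials have only negative roots, this forces $r_{n,m}<r_{n+1,m}<0$ (this is precisely the inequality used in the proof of Theorem \ref{mono}). (c) From Proposition \ref{pro} together with (\ref{def:anm}) and (a), $D_{n,m}$ is Lu Qi-Keng if and only if $r_{n,m}\leq-1$, since the only zero of $(1-t)^{n+m+1}$ lies outside the unit disk and the largest negative root of $A_{n,m}$ is the only candidate to enter $|t|<1$.

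With these in hand, the deduction is short. By Conjecture \ref{rmn} the set $S_m:=\{n\in\mathbb{N}:r_{n,m}>-1\}$ is non-empty, so $n_0(m):=\min S_m$ exists by the well-ordering of $\mathbb{N}$. For $n\geq n_0(m)$, the strict monotonicity from (b) yields $r_{n,m}\geq r_{n_0(m),m}>-1$, and so $D_{n,m}$ is not Lu Qi-Keng by (c). For $n<n_0(m)$, the minimality of $n_0(m)$ gives $n\notin S_m$, i.e.\ $r_{n,m}\leq-1$, and so $D_{n,m}$ is Lu Qi-Keng. Uniqueness of $n_0(m)$ is immediate from its characterization as a minimum.

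The real obstacle is not this deduction but Conjecture \ref{rmn} itself: one must prove unconditionally that the largest negative root of $A_{n,m}(t)$ tends to $0$ as $n\to\infty$ for fixed $m$. A natural attack uses the closed form of Remark \ref{closed}: for a fixed $\varepsilon\in(0,1)$, one would try to show that
\begin{equation*}
A_{n,m}(-\varepsilon)=m!\sum_{j=0}^{n}(-1)^{n+j}(m+1)_j S(1+n,1+j)(1+\varepsilon)^{n-j}
\end{equation*}
has a definite sign for all sufficiently large $n$; combined with $A_{n,m}(0)>0$ (Lemma \ref{rec}(ii)), this would force $r_{n,m}>-\varepsilon$ eventually. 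The delicate point is the asymptotic analysis of this alternating sum, which seems to require precise control of the growth of $S(1+n,1+j)$ in $n$ for bounded $j$. An alternative would be to iterate the identity $A_{n+1,m}=tA_{n,m+1}+m(1-t)A_{n,m}$ of Lemma \ref{lem:rec} to propagate bounds on $r_{n,m}$ under $n\mapsto n+1$, but extracting convergence to $0$ (rather than just monotone increase) from that recurrence appears equally hard and is the step I would expect to absorb most of the work.
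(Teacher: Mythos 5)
The statement you are addressing is labelled a conjecture in the paper and is given no proof there; it is only asserted conditionally (``If we admit that the Conjecture \ref{rmn} is true\dots''). Your conditional deduction is correct and is precisely the argument that the paper's machinery makes available: positivity of the coefficients of $A_{n,m}$ (Lemma \ref{rec}(ii)) forces all roots to be negative, Proposition \ref{pro} together with (\ref{def:anm}) converts the Lu Qi-Keng property into the criterion $r_{n,m}\leq -1$, and Theorem \ref{main2} yields the strict inequality $r_{n,m}<r_{n+1,m}<0$ already exploited in Theorem \ref{mono}; granting $\lim_{n\to\infty}r_{n,m}=0$, setting $n_0(m)=\min\{n\in\mathbb{N}: r_{n,m}>-1\}$ settles both directions, and in fact you only need from Conjecture \ref{rmn} the weaker assertion that $r_{n,m}>-1$ for some $n$ (monotonicity does the rest). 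What remains genuinely open --- and you say so yourself --- is Conjecture \ref{rmn}, or that weaker assertion: your sketched attacks via the closed form of Remark \ref{closed} or by iterating the identity of Lemma \ref{lem:rec} are a programme rather than a proof, so the proposal does not upgrade the conjecture to a theorem; it only makes explicit the conditional implication that the paper states without argument, which is consistent with how the paper presents it.
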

\section*{Acknowledgement}
I am indebted to Professor Hideyuki Ishi who reads the
manuscript and suggested many improvements.
\bibliographystyle{plain}

\end{document}